\newtheorem{theorem}{Theorem}[section]
\newtheorem{lemma}[theorem]{Lemma}
\newtheorem{proposition}[theorem]{Proposition}
\theoremstyle{definition}
\newtheorem{definition}[theorem]{Definition}
\providecommand{\customgenericname}{}
\newcommand{\newcustomtheorem}[2]{%
  \newenvironment{#1}[1]
  {%
   \renewcommand\customgenericname{#2}%
   \renewcommand\theinnercustomgeneric{##1}%
   \innercustomgeneric
  }
  {\endinnercustomgeneric}
}
\newcommand{\IR}{\mathbb{R}}
\newcommand{\IC}{\mathbb{C}}
\newcommand{\IN}{\mathbb{N}}
\newcommand{\IS}{\mathbb{S}}
\newcommand{\cE}{\mathcal{E}}
\newcommand{\cF}{\mathcal{F}}
\newcommand{\cR}{\mathcal{R}}
\newcommand{\cA}{\mathcal{A}}
\newcommand{\cB}{\mathcal{B}}
\newcommand{\cC}{\mathcal{C}}
\newcommand{\cT}{\mathcal{T}}
\newcommand{\HT}{\mathcal{HT}}
\newcommand{\abs}[1]{\left|#1\right|}
\newcommand{\g}{\Gamma}
\providecommand{\abs}[1]{|#1|}
\newcommand{\Complex}{ \mathbb{C} }
\newcommand{\Fourier}{ \mathcal{F}}
\newcommand{\FourierInverse}{ \mathcal{F}^{ - 1 } }
\newcommand{\Laplace}{\mathcal{L}}
\newcommand{\LaplaceInverse}{\mathcal{L}^{-1}}
\newcommand{\LongSet}[2]{\left \{ #1 : #2 \right \}}
\newcommand{\Prime}[1]{ {#1}^{\prime} }
\newcommand{\Real}{ \mathbb{R} }
\newcommand{\DomainOfLambda}{\Sigma_\theta}
\newcommand{\eps}{\varepsilon}
\renewcommand\Re{\operatorname{Re}}
\newcommand{\cl}[1]{\overline{#1}}
\numberwithin{equation}{section}
\title{Maximal $L_p$-$L_q$ Regularity for the Quasi-Steady Elliptic Problems}
\author{Ken Furukawa}
\author{Naoto Kajiwara}
\address{Graduate School of Mathematical Sciences, The University of Tokyo, 3-8-1 Komaba, Meguro, Tokyo, 153-8914, Japan}
\address{Department of Mathematics, Faculty of Science and Technology, Tokyo University of Science, Noda, Chiba, 278-8510, Japan}
\email{kenf@ms.u-tokyo.ac.jp}
\email{kajiwara\_naoto@ma.noda.tus.ac.jp}
\begin{document}

\begin{abstract}
In this paper we consider maximal regularity for the vector-valued quasi-steady linear elliptic problems. 
The equations are the elliptic equation in the domain and the evolution equations on its boundary. 
We prove the maximal $L_p$-$L_q$ regularity for these problems and give examples that our results are applicable. 
The Lopatinskii--Shapiro and the {\it asymptotic} Lopatinskii--Shapiro conditions are important to get boundedness of solution operators.
\end{abstract}

\maketitle

\section{Introduction}\label{Sec: Intro}
We consider the vector-valued quasi-steady problems of the following 
	\begin{align}\label{eq_qs}
		\left\{
			\begin{array}{rll}
				\eta u
				+ \cA(t,x,D) u 
				&= f(t,x) \quad 
				&(t\in J,~ x\in G), \\
				\partial_t \rho  
				+\cB_0(t,x,D) u + \cC_0(t,x,D_\g) \rho 
				&= g_0(t,x) \quad 
				&(t \in  J,~ x\in \g), \\
				\cB_j(t,x,D) u + \cC_j(t,x,D_\g) \rho 
				&= g_j(t,x) \quad 
				&(t\in J,~ x\in \g,~j = 1, \cdots, m), \\
				\rho(0,x) 
				&= \rho_0(x) \quad 
				&(x\in \g),
			\end{array}
		\right.
	\end{align}
where $\eta > 0$, $J \subset [0,T]$ is a finite interval and $G \subset \IR^n$ $(n \geq 2)$ is a bounded or an exterior domain with the boundary $\g$. 
The functions $f, \{g_j\}_{j=0}^m, \rho_0$ are given data and the functions $u$ and $\rho$ are unknown functions. 
$(\cA, \cB_j, \cC_j)$ are differential operators with order $(2m, m_j, k_j)$, respectively. 
The aim of this paper is to obtain maximal $L_p$-$L_q$ regularity of these equations. 
More precisely we characterize the data space $X \times \prod_{j =0}^m Y_j \times \pi Z_\rho$ and the solution space $Z_u\times Z_\rho$ such that these spaces are isomorphism. 

This quasi-steady problems are considered as the linearized equations for the various non-linear equations, e.g. free boundary problems. 
One of the successful methods to solve the free boundary problems is the transformation from time-varying domain to fixed domain. 
After we use this transformation, the equation has an unknown function called a height function on the boundary, and the equation on the boundary has time derivative of order one. 
If the original equation has a time derivative in an interior domain, the transformed equation also has a time derivative in a domain. 
On the other hand, if there is no time derivative in the original equation in the domain, the transformed equation does not have a time derivative. 
Usually, the derived equation is also non-linear, but the linearized equation corresponds to the relaxation type or the quasi-steady type. 
The first one corresponds to the first derivative in the interior equation and it has already considered in the paper \cite{DenkHieberPruss2003}. 
As far as we know, the second one has not considered yet. 
Therefore we consider these problems in this paper. 

The paper is organized as follows. 
In Section \ref{Sec: Main}, basic function spaces and assumptions for $\mathcal{A}$, $\mathcal{B}_j$ and $\mathcal{C}_j$ including smoothness are introduced.
Then our main result is stated. 
In Section \ref{Sec: Pre}, basic notions of operator theory, e.g. operator-valued multiplier theorems and $H^\infty$-calculus, are introduced for the reader's convenience.
In Section \ref{Sec: solvability}, we first consider (\ref{eq_qs}) under $G = \Real^n_+$ with the differential operators having no lower order terms and constant coefficients.
The problem is first reduced into the case of $f = 0$ and $\rho_0 = 0$.
Then the partial Laplace-Fourier transform is applied to get the solution formula of Fourier multiplier type.
In this step, the Lopatinskii--Shapiro condition (LS) is frequently used.
Operator-valued Fourier multiplier theorem due to Weis \cite{Weis2001} and the operator-valued $H^\infty$-functional calculus due to Kalton--Weis \cite{KaltonWeis2001} are applied to the solution operator to obtain its maximal regularity of the solutions.
Here, the asymptotic Lopatinskii--Shapiro (ALS) conditions are also needed.
By perturbation and localization procedure, our maximal regularity result for the full problem of (\ref{eq_qs}) is proved.

\section{Main results}\label{Sec: Main}
Let us introduce notation to give our main results and state our theorem.
Let $\mathbb{N}$ be a set of positive integer and $\mathbb{N}_0 = \mathbb{N} \cup \{ 0 \}$. 
Differential operators in (\ref{eq_qs}) are given by
	\begin{align*}
		\cA(t,x,D) 
		:= \sum_{|\alpha|\le 2m} a_\alpha(t,x) D^\alpha, \\
		\cB_j(t,x,D) 
		:= \sum_{|\beta|\le m_j} b_{j\beta}(t,x) D^\beta, \\
		\cC_j(t,x,D_\g) := \sum_{|\gamma|\le k_j} c_{j\gamma}(t,x) D^\gamma_\g, 
		\end{align*}
where $m$ is a positive integer, $m_j \in \IN_0 \cap [0, 2m)$, $k_j\in \IN_0$ for $j=0, \cdots, m$.
The symbols $D$, respectively $D_\g$ mean $-i\nabla$, respectively $-i\nabla_\g$, where $\nabla$ denotes the gradient in $G$ and $\nabla_\g$ the surface gradient on $\g$. 
We assume that all boundary operators $\cB_j$ and at least one $\cC_j$ are non-trivial. 
The order $k_j$ is defined by $-\infty$ when $\cC_j=0$. 
The unknown functions $u(t,x), \rho(t,x)$ belongs to Hilbert spaces $E$ and $F$.
Note that the case $E=F=\IC^N(N\in \IN)$ is allowed. 
For the coefficients of the above differential operators, $a_\alpha(t,x), b_{j\beta}(t,x)\in \cB(E)$, $c_{j\gamma}(t,x)\in \cB(F, E)$ for $j=1, \cdots, m$, and $b_{0\beta}(t,x)\in \cB(E, F)$ and $c_{0\gamma}(t,x) \in \cB(F)$. 
Let $1<p,q< \infty$. 
We would like to find the maximal $L_p$-$L_q$ regularity solutions, i.e. 
	\begin{align*}
		u \in Z_u 
		:= L_p(J; W^{2m}_q(G; E)),
	\end{align*}
 then we should assume 
	\begin{align*}
		f \in X 
		:= L_p(J; L_q(G; E)). 
	\end{align*}
Since we expect the regularity of $g_j$ is the same as $\cB_j u$, 
	\begin{align*}
		& g_0 \in Y_0 
		:= L_p(J; W^{2m\kappa_0}_{q}(\g; F)),  \\
		& g_j \in Y_j 
		:= L_p(J; W^{2m\kappa_j}_{q}(\g; E)) \quad (j = 1, \cdots , m)
	\end{align*}
with 
	\begin{align*}
		\kappa_j 
		:= 1- \frac{m_j}{2m} - \frac{1}{2mq} \quad (j = 0, \cdots, m)
	\end{align*}
from the trace theorem. 
Thus, the solution class which $\rho$ belongs to should be 
	\begin{align*}
		\rho &\in  W^1_p(J; W^{2m\kappa_0}_{q}(\g; F)) 
		\cap \bigcap_{j=0}^m L_p(J; W^{k_j + 2m\kappa_j}_{q} (\g; F))\\
		&= W^1_p(J; W^{2m\kappa_0}_{q}(\g; F)) 
		\cap L_p(J; W^{l + 2m\kappa_0}_{q} (\g; F)) \\
		& =: Z_\rho, 
	\end{align*}
from the differential structure of the equation (\ref{eq_qs}), where $l:=\max_{j=0, \cdots, m} l_j$ with $l_j = k_j -m_j +m_0$.
We always assume $l \geq 0$ in this paper.
It can be expected by the trace theorem that
	\begin{align*}
		\rho_0 \in \pi Z_\rho 
		:= B^{l(1-1/p)+ 2m\kappa_0}_{qp}(\g; F).
	\end{align*}
Under these settings and assumptions (E), (SA), (SB), (SC), (LS) and (ALS) introduced later,  we shall show the solution operator is an isomorphism between the data $(f, \{g_j\}_{j=0}^m, \rho_0) \in X\times \prod_{j=0}^m Y_j \times \pi Z_\rho$ and the solution $(u, \rho) \in Z_u \times Z_\rho$. 
	
First we assume normal ellipticity of $\cA$ as usual. 
The subscript $\#$ denotes the principal part of the corresponding operator, e.g. $\cA_\#(t,x,D) =\sum_{|\alpha|=2m} a_\alpha(t,x) D^\alpha$. 

\flushleft{\textbf{(E)}} (Ellipticity of the interior symbol)
For all $t\in J$, $x\in \cl{G}$ in the case $G$ is a bounded domain, $x\in \cl{G}\cup\{\infty\}$ in the case $G$ is an exterior domain, and for all $\xi \in \IR^n$ satisfying $|\xi|=1$, we assume normal ellipticity for $\mathcal{A} (t, x, \xi)$ with an angle less than $\pi / 2$, and thus
\[\sigma(\cA_\# (t,x,\xi)) \subset \IC_+:=\{z\in\IC\mid \Re z >0\}. \]
Here $\sigma(\cA_\# (t,x,\xi))$ denotes the spectrum of the bounded operator $\cA_\# (t,x,\xi) \in \cB(E)$. 

Next, we introduce conditions of smoothness to the coefficients of $\mathcal{A}$, $\mathcal{B}_j$ and $\mathcal{C}_j$.
These conditions allow us to use localization and perturbation argument.
\flushleft{\textbf{(SA)}} For $|\alpha|=k\le 2m-1$, there exists $r_\alpha \ge q$ with $\frac{n}{r_\alpha}\le 2m-k$ such that 
	\begin{align*}
		a_\alpha \in L_\infty(J; L_{r_\alpha}(G; \cB(E))). 
	\end{align*}
For $|\alpha|=2m$, assume 
	\begin{align*}
		a_\alpha \in BUC(J\times \cl{G}; \cB(E)). 
	\end{align*}
In the case $G$ is exterior domain, we impose the condition that the asymptotic state at infinity $a_{\alpha}(t,\infty):=\lim_{|x|\to \infty, x\in G}a_\alpha(t,x)$ exists and is bounded uniformly with respect to $t\in J$ for all $|\alpha|=2m$.
	
\flushleft{\textbf{(SB)}} Let $\cE_0:=\cB(E, F)$ and $\cE_j:=\cB(E)$ for $j=1, \cdots, m$. 
For each $j=0, \cdots, m$ and $|\beta|=k \le $ $ m_j -1$, there exist $s_{j\beta}, r_{j\beta} \ge q$ with $\frac{n-1}{s_{j\beta} }\leq m_j - k$, $\frac{n-1}{r_{j\beta}} \leq 2m - k - 1/q$ such that 
	\begin{align*}
		b_{j\beta} \in L_\infty(J; (L_{s_{j\beta}} 
		\cap B^{2m\kappa_j}_{r_{j\beta}, q})(\g; \cE_j)). 
	\end{align*}
For $|\beta|=m_j$, assume 
	\begin{align*}
		b_{j\beta} \in BUC(J\times \g; \cE_j). 
	\end{align*}
	
\flushleft{\textbf{(SC)}} Let $\cF_0:=\cB(F)$ and $\cF_j:=\cB(F, E)$ for $j=1, \cdots, m$. 
For each $j=0, \cdots, m$ and $|\gamma|= $ $k\le k_j -1$, there exist $t_{j\gamma}, \tau_{j\gamma}\ge p$ and $s_{j\gamma}^c, r_{j\gamma}^c \ge q$ with $\frac{l}{t_{j\gamma}} + \frac{n-1}{s_{j\gamma}^c} \le l - k + m_j - m_0$ and $\frac{l}{\tau_{j\gamma}} + $ $ \frac{n-1}{r_{j\gamma}^c} \le l - k + 2m\kappa_0$ such that 
	\begin{align*}
	c_{j\gamma} \in L_{t_{j\gamma}}(J; L_{s_{j\gamma}^c}(\g; \cF_j)) \cap L_{\tau_{j\gamma}}(J; B^{2m\kappa_j}_{r_{j\gamma}^c, q}(\g; \cF_j))
	\end{align*}
For $|\beta|=k_j$, assume 
	\begin{align*}
		c_{j\gamma} \in BUC(J\times \g; \cF_j). 
	\end{align*}

The following two conditions are needed to get the formula of solution operator and ensure their boundedness. 

{\bf (LS)}(Lopatinskii--Shapiro conditions) For each fixed $t \in J$ and $x\in \g$, we freeze the coefficients of differential operator at $(t,x)$. 
We rewrite the equations (\ref{eq_qs}) in coordinates associated with $x$ so that the positive part of $x_n$-axis has the direction of the inner normal at $x$ after a transformation and a rotation.
For all $\eta > 0$, $(\lambda, \xi')\in (\DomainOfLambda \times\IR^{n-1} )\setminus \{ (0,0) \}$ ($\theta > \pi / 2$) and $\{h_j\}_{j=0}^m\in F\times E^m$, the ODEs on the half line $\IR_+=(0,\infty)$ given by 
	\begin{align}\label{eq_ls}
		\left \{
			\begin{array}{rll}
				\eta v (y)
				+ \cA_\#(t,x,\xi', D_y) v(y) 
				&= 0 \quad &(y>0), \\
				\cB_{0\#}(t,x,\xi', D_y) v(0) 
				+(\lambda + \cC_{0\#}(t,x, \xi')) \sigma 
				&= h_0, \quad 
				& \,  \\
				\cB_{j\#}(t,x,\xi', D_y) v(0) 
				+ \cC_{j\#}(t,x,\xi') \sigma 
				&= h_j \quad 
				&(j = 1, \cdots, m)
			\end{array}
		\right.
	\end{align}
admit a unique solution $(v, \sigma) \in C_0^{2m}( \Real_+; E) \times F$, where 
	\begin{align*}
		C^{2m}_0 ( \Real_+; E) 
		= \left \{ v \in C^{2m} ( \Real_+; E) \, ; \, \lim_{y \rightarrow \infty} v (y) = 0 \right \}. 
	\end{align*}
To obtain the maximal $L_p$-$L_q$ regularity, we need another type of Lopatinskii--Shapiro condition which ensures boundedness of the symbol of the solution operator. 

\flushleft{\textbf{(ALS)}} (Asymptotic Lopatinskii--Shapiro conditions) For each fixed $t \in J$ and $x\in \g$ we rewrite the equations (\ref{eq_qs}) by the same way as above.
For all  $\eta  > 0$, $\xi \in \Real^{n- 1}$ and $\{h_j\}_{j=1}^m\in F\times E^m$,
	\begin{align*}
			\begin{array}{rll}
				\eta v (y)
				+ \cA_\#(t,x,\xi', D_y) v(y) 
				&= 0 \quad &(y>0), \\
				\cB_{j\#}(t,x,\xi', D_y) v(0) 
				&= h_j \quad 
				&(j = 1, \cdots, m)
			\end{array}
	\end{align*}
admit a unique solution $v \in C_0^{2m}(\Real_+; E)$. 
For all $(\lambda, \xi')\in (\DomainOfLambda\cup\{0\}) \times \mathbb{S}^{n-2}$ ($\theta > \pi / 2$), all $\{h_j\}_{j=0}^m\in F\times E^m$ the ordinary differential equations in $\IR_+$ given by 
	\begin{align*}
		\begin{array}{rll}
			\cA_\#(t,x,\xi', D_y) v(y) 
			&= 0 \quad 
			&(y>0), \\
			\cB_{0\#}(t,x,\xi', D_y) v(0) 
			+(\lambda + \delta_{l, l_0}\cC_{0\#}(t,x, \xi')) \sigma 
			&= h_0, \quad 
			& \, \\
			\cB_{j\#}(t,x,\xi', D_y) v(0) 
			+ \delta_{l,l_j}\cC_{j\#}(t,x,\xi') \sigma 
			&= h_j \quad 
			&(j = 1, \cdots, m)
		\end{array}
	\end{align*}
admit a unique solution $(v, \sigma) \in C_0^{2m}(\Real_+; E) \times F$. 
Here $\IS^{n-2}:=\{\xi' \in \IR^{n-1} ; |\xi'|=1\}$ and $\delta_{i,j}$ is the Kronecker delta, $\delta_{i,j} = 1$ for $i=j$ and $\delta_{i,j}=0$ for $i\neq j$. 
Moreover, we assume the following elliptic equations. 
For $\xi'\in \IS^{n-2}$, 
	\begin{align*}
		\cA_{\#}(t, x, \xi', D_y) v(y) 
		&= 0 \quad (y>0), \\
		\cB_{j\#}(t, x, \xi', D_y) v(0) 
		&= h_j \quad (j=1, \cdots m)
	\end{align*}
admit a unique solution $v \in C_0^{2m}(\Real_+; E)$, respectively. 
We are now in the position to state our main results. 

\begin{theorem}\label{thm_main}
Let $J=[0, T]$, $G\subset \IR^n$ be a domain with a compact boundary $\g=\partial G$ of class $C^{2m + l -m_0}$, $1<p, q<\infty$ and $E$ and $F$ be Hilbert spaces. 
Assume assumptions $(E)$,  $(SA)$,  $(SB)$,  $(SC)$,  $(LS)$ and $(ALS)$ hold.
Then, there exist positive constants $\eta_0$, $C$ and $C_T$, if $\eta \geq \eta_0$, for
	\begin{align*}
		(f, \{g_j\}_{j=0}^m, \rho_0) \in X \times \prod_{j=0}^m Y_j \times \pi Z_\rho,
	\end{align*} 
(\ref{eq_qs}) admits a unique solution $(u, \rho) \in Z_u \times Z_\rho$ such that
		\begin{align*}
			\Vert
				u
			\Vert_{Z_u}
			+ \Vert
				\rho
			\Vert_{Z_\rho}
			\leq C \Vert
				f
			\Vert_{X}
			+ C \sum_{j= 0}^m \Vert
				g_j
			\Vert_{Y_j}
			+  C_T \Vert
				\rho_0
			\Vert_{\pi Z_\rho}.
		\end{align*}
\end{theorem}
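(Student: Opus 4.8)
The plan is to follow the standard Denk--Hieber--Prüss strategy for maximal regularity of boundary value problems, adapted to the quasi-steady structure (no time derivative in the interior equation). First I would reduce to the case $f = 0$ and $\rho_0 = 0$. For the reduction of $f$, since $\eta > 0$ and $\cA$ is normally elliptic with angle less than $\pi/2$, the operator $\eta + \cA(t,\cdot,D)$ with natural domain in $W^{2m}_q(G;E)$ is invertible (for $\eta$ large, by the elliptic theory and assumption (E)); solving $\eta u_1 + \cA u_1 = f$ with, say, the boundary conditions $\cB_j u_1 = 0$ gives $u_1 \in Z_u$ with the right estimate, and subtracting reduces to $f = 0$ with modified boundary data $g_j$ still in $Y_j$. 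The reduction of $\rho_0$ is done by an extension operator: pick $\rho_1$ solving $\partial_t \rho_1 + (\text{something elliptic on } \g)\rho_1 = 0$ with $\rho_1(0) = \rho_0$, using the known trace characterization $\pi Z_\rho = B^{l(1-1/p)+2m\kappa_0}_{qp}(\g;F)$; this contributes the $C_T\Vert\rho_0\Vert_{\pi Z_\rho}$ term and reduces to $\rho_0 = 0$ and $g_j$ with vanishing time trace where the trace is defined.

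Next I would treat the model problem: $G = \IR^n_+$, operators with constant coefficients and no lower-order terms, $f = 0$, $\rho_0 = 0$. Apply the Laplace transform in $t$ and the Fourier transform in the tangential variable $x' \in \IR^{n-1}$, reducing \eqref{eq_qs} to the ODE system \eqref{eq_ls} on $\IR_+$ with parameters $(\lambda,\xi')$, $\lambda \in \Sigma_\theta$. By condition (LS), this system has a unique solution $(v,\sigma)$ for each $(\lambda,\xi') \neq (0,0)$, so one obtains an explicit solution operator as an operator-valued symbol $m(\lambda,\xi')$ acting on the transformed data $\{\hat g_j\}$. The heart of the matter is to show this symbol generates bounded operators on the relevant $L_p$-$L_q$ spaces with the correct scaling. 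Here I would invoke the operator-valued Fourier multiplier theorem of Weis and the $H^\infty$-calculus machinery of Kalton--Weis (both recalled in Section~\ref{Sec: Pre}): one checks $\Rg$-boundedness of the symbol and its derivatives after suitable rescaling in $\xi'$, using homogeneity of the principal parts together with the (ALS) conditions, which are precisely what guarantees the symbol stays bounded in the degenerate regimes $\lambda \to 0$ or $|\xi'| \to \infty$ (corresponding to the Kronecker-delta-modified limiting problems in (ALS)). This yields maximal $L_p$-$L_q$ regularity with constants independent of $\eta \ge \eta_0$ for the half-space model problem.

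Then I would pass from the model problem to variable coefficients and general domains by the usual localization and perturbation argument. Using a partition of unity subordinate to a finite cover of $\g$ (compact), flatten each boundary patch via a $C^{2m+l-m_0}$ diffeomorphism; the principal parts of the transformed operators at the center of each patch match a model problem covered by the previous step, and the commutators, lower-order terms, and coefficient oscillations are small perturbations in the appropriate norms thanks to the smoothness assumptions (SA), (SB), (SC) — this is exactly where the integrability exponents $r_\alpha, s_{j\beta}, t_{j\gamma}$, etc., and the Sobolev embeddings encoded in the inequalities like $\frac{n}{r_\alpha} \le 2m - k$ are used, via Hölder and mixed-norm embedding estimates to absorb lower-order terms with small constant. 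A Neumann series argument (taking the patches small and $\eta$ large) then gives a right inverse of the full solution operator; uniqueness follows from the same a priori estimate applied to the difference of two solutions. Combining the pieces yields the isomorphism and the stated estimate.

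The main obstacle I anticipate is the multiplier estimate for the solution operator symbol in the quasi-steady setting. Because the interior equation carries no time derivative, the natural parabolic scaling $\lambda \sim |\xi'|^{2m}$ is broken, and the symbol must be analyzed on the anisotropic region where $\lambda$ (from the boundary evolution) and $|\xi'|$ scale independently; checking $\Rg$-boundedness uniformly there, and in particular verifying that the (ALS) conditions suffice to control the two separate degenerate limits, is the delicate point. Getting the function-space bookkeeping right — the exponents $\kappa_j$, the definition of $l$ and $Z_\rho$, and the compatibility of trace spaces across the reductions — is also technically heavy but routine once the model problem is in hand.
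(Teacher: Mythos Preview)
Your proposal is correct and follows essentially the same route as the paper: reduce to $f=0$, $\rho_0=0$; solve the constant-coefficient half-space model problem via Laplace--Fourier transform, using (LS) for solvability of the resulting ODE and (ALS) to control the symbol in the degenerate regimes, then apply the Weis multiplier theorem in the tangential variable followed by Kalton--Weis in the time variable; finally localize and perturb using (SA), (SB), (SC) and a Neumann series.

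Two small remarks. First, for the reduction of $f$ the paper does not solve an auxiliary elliptic boundary value problem as you suggest; instead it zero-extends $f$ to $\IR^n$ and solves $\eta u_* + \cA u_* = E_{\IR^n} f$ on the whole space by Mikhlin, which avoids invoking any solvability on $\IR^n_+$ before it has been established. Your route works too (the stationary elliptic BVP is classical under (E) and the first (ALS) condition), but the whole-space extension is cleaner for the half-space model step. Second, your description of the degenerate regimes as ``$\lambda \to 0$ or $|\xi'|\to\infty$'' is slightly off: the paper's symbol is written in the variables $\zeta = \xi'/\mu$, $\tilde a = (\eta/\mu^{2m})^{1/2m}$, and $\nu = \lambda/\mu^l$, and the relevant limits are (i) $|\nu|\to\infty$ with $\tilde a \to a_* \in [0,1]$ and (ii) $\nu \to c \in \Sigma_\theta\cup\{0\}$ with $\tilde a \to 0$; the three clauses of (ALS) correspond exactly to these cases (with $a_*=0$ or $a_*>0$ in (i)). This is the ``anisotropic scaling'' point you correctly flag as the main obstacle.
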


\section{Preliminaries}\label{Sec: Pre}

In this section, notation, notion, basic tools of vector-valued harmonic analysis are introduced.
For Banach spaces $X$ and $Y$, $\mathcal{B} (X ; Y)$ denotes the set of bounded linear operators from $X$ to $Y$.
$H^\infty (\Sigma_\phi )$ denotes the set of bounded holomorphic functions on a sector 
	\begin{align*}
		\Sigma_\phi : = \left \{ r e^{i \psi} \in \Complex \setminus \{ 0 \}  \, ; r > 0, \, \abs{\psi} < \phi \right \}.
	\end{align*}
For a Banach space $X$, $H^\infty (\Sigma_\phi ; X)$ is the set of $X$-valued bounded holomorphic functions on $\Sigma_\phi$ for $0< \phi < \pi$ equipped with the norm 
	\begin{align*}
		\Vert f \Vert_{H^\infty (\Sigma_\phi)} 
		: = \sup_{\lambda \in \Sigma_\phi} \abs{f (\lambda)}.
	\end{align*}
$L_p(\Omega ; X)$ ($1 \leq p \leq \infty$) and $W^s_q (\Omega ; X)$ ($s \in \Real, \, 1 \leq q \leq \infty$) are the $X$-valued Lebesgue space and the Sobolev space on $\Omega$.
$\Fourier$ and $\FourierInverse$ are Fourier transform and its inverse transform, respectively.
Especially, we denote $\Fourier_{x^\prime}$ by the partial Fourier transform with respect to $x^\prime$-variable.
$\Laplace$ and $\LaplaceInverse$ are Laplace transform and its inverse transform, respectively.
	
	\begin{definition}
		A Banach space $X$ is said to be of class $\HT$ if the Hilbert transform $H$ defined by 
		\begin{align*}
			Hf(t) 
			:=\frac{1}{\pi} \lim_{R\to\infty} \int_{R^{-1}\le |s| \le R} f(t-s) \frac{ds}{s}
		\end{align*}
		is bounded on $L_p(\IR; X)$ for some $p\in(1, \infty)$. 
		When $X$ is of the class $\HT$, then $L_p(J; X)$ is also of class $\HT$. 
	\end{definition}
	
	\begin{definition}
Let $X$ and $Y$ be Banach spaces. 
A family of operators $\cT \subset \cB(X ; Y)$ is said to be $\cR$-bounded, if there exists a constant $C>0$ and $p\in[1, \infty)$ such that, for each positive integer $N$, $\{T_i\}_{i=1}^N \subset \cT$, $\{x_i\}_{i=1}^N \subset X$ and for all independent symmetric $\{-1, 1\}$-valued random variables $\eps_i$ on a probability space $(\Omega, \cA, \mu)$, the inequality 
		\begin{align} \label{eq_R_bounded}
			\left|
				\sum_{i=1}^N \eps_i T_i x_i
			\right|_{L_p(\Omega; Y)}
			\le C \left|
				\sum_{i=1}^N \eps_i x_i
			\right|_{L_p(\Omega; X)}
		\end{align}
holds. 
We denote by $\mathcal{R} \mathcal{T}$ the infimum constant of $C$ which (\ref{eq_R_bounded}) holds.
	\end{definition}
It is known that if (\ref{eq_R_bounded}) holds for some $p\in[1, \infty)$, then (\ref{eq_R_bounded}) holds for all $p\in[1, \infty)$.
Note that uniformly bounded family of operators on Hilbert spaces is always $\mathcal{R}$-bounded.

	\begin{definition}
		A Banach space $X$ is said to have property $(\alpha)$ if there exists a constants $C>0$ such that 
		\begin{align*}
			\left|
				\sum_{i,j=1}^N \alpha_{ij}\eps_i \eps_j^{\prime} x_{ij}
			\right|_{L_2(\Omega\times\Omega'; X)}
			\le C \left|
				\sum_{i,j=1}^N \eps_i \eps_j^{\prime} x_{ij}
			\right|_{L_2(\Omega\times\Omega'; X)}
		\end{align*}
for all $\alpha_{ij}\in\{-1, 1\}$, $\{x_{ij}\}_{i,j=1}^N\subset X$, positive integer $N$, and all symmetric independent $\{-1, 1\}$-valued random variables $\eps_i$ (respectively $\eps_j^\prime$) on a probability space $(\Omega, \cA, \mu)$ (respectively $(\Omega', \cA', \mu')$). 
$\HT(\alpha)$ denotes the class of Banach spaces which belong to $\HT$ and have property $(\alpha)$.
	\end{definition}
Note that Hilbert space is of the class $\HT(\alpha)$ and all closed subspaces of $L_p (G)$ have property $(\alpha)$.
	
	\begin{proposition}[Operator-valued Fourier multiplier theorem of Lizorkin type](see \cite{PrussSimonett2016})
Let $1 $ $< p < \infty$, $X$ and $Y$ be Banach spaces of the class $\mathcal{HT} (\alpha)$.
Let $\mathcal{M} \subset C^n (\Real^n \setminus \{ 0 \} ; \mathcal{B} (X ; Y))$ be a family of multipliers such that
		\begin{align*}
			\mathcal{R} \left \{
				\xi^\alpha \partial_\xi^\alpha m (\xi) 
				\, : \, \xi \in \Real^n \setminus \{ 0 \}, \, \alpha \in \{0, 1 \}^n, \,  m \in \mathcal{M}
			\right \}
			= : C_L
			< \infty.
		\end{align*}
Then $\FourierInverse m \Fourier \in \mathcal{B} \left( L^p (\Real^n ; X) \,	; \, L^p (\Real^n ; Y) \right)$.
Moreover,
		\begin{align*}
			\mathcal{R} \left \{
				\FourierInverse m \Fourier \,
				; \, m \in \mathcal{M}
			\right \}
			\leq C C_L,
		\end{align*}
for some constant $C = C (p, n,  X ,Y)$.
	\end{proposition}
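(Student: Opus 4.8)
\emph{Overall strategy.} I would prove the proposition by induction on the dimension $n$, the base case $n=1$ being the operator-valued Mikhlin multiplier theorem (Weis), and the inductive step exploiting the canonical isometric identification $L^p(\IR^n;X)\cong L^p(\IR_{\xi_n};L^p(\IR^{n-1}_{\xi'};X))$ together with the commutation $\Fourier_{\IR^n}=\Fourier_{\xi_n}\Fourier_{\xi'}$. The crucial structural fact that makes this work is that both membership in $\HT$ and property $(\alpha)$ pass to $L^p$-spaces with values in such a space for $1<p<\infty$; hence at each stage of the induction the coefficient spaces $\widetilde X:=L^p(\IR^{n-1};X)$ and $\widetilde Y:=L^p(\IR^{n-1};Y)$ are again of class $\HT(\alpha)$, so the one-dimensional result may be reapplied.

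\emph{Step 1 (the case $n=1$).} Here the hypothesis says exactly that the family $\{m(\xi):\xi\neq 0,\,m\in\cM\}$ and the family $\{\xi m'(\xi):\xi\neq 0,\,m\in\cM\}$ are $\cR$-bounded by $C_L$. First I would recall the proof that, on a $\HT$ space, $\FourierInverse m\Fourier$ is bounded $L^p(\IR;X)\to L^p(\IR;Y)$ with norm $\lesssim \cR\{m(\xi)\}+\cR\{\xi m'(\xi)\}$: decompose $m=\sum_{k\in\IZ}m\varphi_k$ dyadically with $\varphi_k(\xi)=\varphi(2^{-k}\xi)$, estimate each block, and sum the blocks by Rademacher averaging, where the $\HT$ (= UMD) property is consumed via the martingale/Hilbert-transform structure and the Kahane contraction principle. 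To upgrade this to $\cR$-boundedness of the family $\{\FourierInverse m\Fourier:m\in\cM\}$, one runs the same argument after tensoring with a second, independent system of Rademacher variables indexing the family; reshuffling these double Rademacher sums is precisely the point at which property $(\alpha)$ is needed. This yields $\cR\{\FourierInverse m\Fourier:m\in\cM\}\le C(p,X,Y)\,C_L$.

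\emph{Step 2 (induction $n-1\Rightarrow n$).} Write $\xi=(\xi',\xi_n)$ and set $\widetilde X,\widetilde Y$ as above. For $m\in\cM$ and fixed $\xi_n\neq 0$, the map $\xi'\mapsto m(\xi',\xi_n)$ lies in $C^{n-1}(\IR^{n-1}\setminus\{0\};\cB(X;Y))$, and since $\{0,1\}^n=\{0,1\}^{n-1}\times\{0,1\}$ the two families $\{(\xi')^{\beta}\partial_{\xi'}^{\beta}m(\xi',\xi_n)\}$ and $\{(\xi')^{\beta}\partial_{\xi'}^{\beta}(\xi_n\partial_{\xi_n}m)(\xi',\xi_n)\}$ (over $\xi\neq 0$, $\beta\in\{0,1\}^{n-1}$, $m\in\cM$) each have $\cR$-bound $\le C_L$. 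Applying the induction hypothesis to each and writing $M_m(\xi_n):=\FourierInverse_{\xi'}m(\cdot,\xi_n)\Fourier_{\xi'}\in\cB(\widetilde X;\widetilde Y)$, one obtains that $\xi_n\mapsto M_m(\xi_n)$ is $C^1$ with $M_m'(\xi_n)=\FourierInverse_{\xi'}\partial_{\xi_n}m(\cdot,\xi_n)\Fourier_{\xi'}$, and that $\cR\{M_m(\xi_n)\}+\cR\{\xi_n M_m'(\xi_n)\}\le C(p,n,X,Y)\,C_L$. Now Step~1, applied on the $\HT(\alpha)$ pair $(\widetilde X,\widetilde Y)$ to the family $\{M_m\}$, shows $\FourierInverse_{\xi_n}M_m\Fourier_{\xi_n}$ is $\cR$-bounded $L^p(\IR;\widetilde X)\to L^p(\IR;\widetilde Y)$ with the asserted bound; by Fubini on the Fourier side this operator equals $\FourierInverse m\Fourier$, closing the induction.

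\emph{Main obstacle.} The genuinely delicate issue is not the multiplier estimate for a single operator—that is in essence the operator-valued Mikhlin theorem—but the propagation of $\cR$-boundedness of an entire \emph{family} through the construction. Already in dimension one this forces property $(\alpha)$, to manage the two independent Rademacher systems coming from the dyadic sum and from the family index; in the inductive step one must moreover transfer $\cR$-boundedness in $\cB(\widetilde X;\widetilde Y)$ to $\cR$-boundedness of the induced one-variable Fourier-multiplier operators on $L^p(\IR;\widetilde X)$, which is again exactly the $\cR$-bounded form of Step~1. A smaller but necessary technical point is the $C^1$-dependence of $M_m(\xi_n)$ on $\xi_n$ with the expected derivative, obtained by differentiating under the inverse Fourier integral using $m\in C^{n}$ and dominated convergence.
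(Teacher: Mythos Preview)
The paper does not give its own proof of this proposition; it is quoted with a reference to \cite{PrussSimonett2016} and used as a black box. Your outline is correct and is precisely the standard argument one finds in the cited sources: the one-dimensional case is Weis's $\cR$-bounded Mikhlin theorem, property $(\alpha)$ enters to handle the second Rademacher system indexing the family $\cM$, and the passage to $n$ variables is by induction via $L^p(\IR^n;X)\cong L^p(\IR;L^p(\IR^{n-1};X))$ together with the stability of the class $\HT(\alpha)$ under $L^p$. The splitting $\{0,1\}^n=\{0,1\}^{n-1}\times\{0,1\}$ is exactly what reduces the Lizorkin condition in $n$ variables to a one-variable Lizorkin condition on the $\cB(\widetilde X;\widetilde Y)$-valued multiplier $\xi_n\mapsto M_m(\xi_n)$, so there is no gap in the inductive step.
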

	
We define a class of holomorphic functions vanishing at the origin and infinity by
	\begin{align*}
		H^\infty_0 (\Sigma_\phi)
		= \left \{
			f \in H^\infty (\Sigma_\phi) \,
			; \, \abs{f (\lambda)} \leq C \abs{\chi (\lambda)}^\varepsilon \, \, \mathrm{for \, \, some}\, \, C > 0, \, \, \varepsilon > 0
		\right \},
	\end{align*}
where $1 < \phi < \pi$ and $\chi (\lambda) = \lambda / ( 1 + \lambda)^2$.
Let $0<\phi_A < \phi < \pi$ and $A$ be a sectorial operator  with spectral angle $\phi_A$ and $f \in H_0^\infty (\Sigma_\phi)$.
We define $f (A)$ via the Cauchy formula
	\begin{align*}
		f (A)
		:= \frac{1}{2 \pi i} \int_{\partial \Sigma_\phi}
			(\lambda - A)^{-1} f (\lambda)
		d \lambda.
	\end{align*}
It is called that a sectorial operator $A : D (A) \subset X \rightarrow Y$ with spectral angle $\phi_A$ have a bounded $H^\infty$-calculus if there exists a constant $C > 0$
	\begin{align} \label{def_H_infty_calculus_estimate}
		\Vert
			f (A)
		\Vert_{\mathcal{B}(X ; Y)}
		\leq C \Vert
			f
		\Vert_{H^\infty (\Sigma_\phi)}
	\end{align}
holds for all $f \in H^\infty_0 (\Sigma_\phi)$, ($\phi > \phi_A$).
Sectorial operators satisfying (\ref{def_H_infty_calculus_estimate}) have an extended calculus $f (A)$ for $f \in H^\infty (\Sigma_\phi)$ by the canonical way, and this extension is uniquely determined.
	\begin{definition}
Let $X$ be a Banach space.
Let $0<\phi_A <\pi$ and $A$ be a sectorial operator on $X$ with spectral angle $\phi_A$ admitting a bounded $H^{\infty}$-calculus.
$A$ is said to have a $\mathcal{R}$-bounded $H^{\infty}$- calculus if 
		\begin{align} \label{eq_R_bounded_H_infty_calculus}
			\LongSet{
				h (A)
			}{
				h \in H^\infty (\Sigma_\phi ), |	h |_{ H^\infty (\Sigma_\phi )} \leq 1
			}
		\end{align}
is $\mathcal{R}$-bounded for some $\phi \geq \phi_A$.
Such an operator is denoted by $A \in \mathcal{R}H^\infty (X)$.
We denote by $\phi_{\mathcal{R} H^\infty}$ the infimum of $\phi$ which (\ref{eq_R_bounded_H_infty_calculus}) holds.
	\end{definition}
Let us introduce the Kalton--Weis theorem, which gives a sufficient condition for boundedness of joint functional calculus and is used to show boundedness of solution operator in this paper, see e.g. \cite{DoreVenni2005}, \cite{KaltonWeis2001}, \cite{KunstmannWeis2004} and \cite{PrussSimonett2016}.
	
	\begin{lemma}[Kalton--Weis Theorem](see \cite{KaltonWeis2001},\cite{DoreVenni2005}, \cite{KunstmannWeis2004} and \cite{PrussSimonett2016}) \label{lem_Kalton_Weis}
Let $X$ be a Banach space of the class $\mathcal{HT}(\alpha)$, $A$ be a sectorial operator with spectral angle $\phi_A$ admitting a bounded $H^{\infty}$-calculus and $\mathcal{F}$ be a family of operators satisfying $\mathcal{F} \subset H^\infty (\Sigma_\phi ; \mathcal{B} (X))$ for $\phi > \phi_A$.
Assume each $F \in \mathcal{F}$ commute with the resolvent of $A$, i.e. $F (\lambda) (\mu - A)^{-1} = (\mu - A)^{-1} F (\lambda)$, and
		\begin{align*}
			\LongSet{ F (z) }{ z \in \Sigma_\phi, F \in \mathcal{F}}
		\end{align*} 
is $\mathcal{R}$-bounded.
Then there exist a constant $C > 0$ such that
		\begin{align*}
			\mathcal{R} \left(
				\mathcal{F} (A)
			\right)
			\leq C \mathcal{R} \LongSet{
				F (z)
			}{
				z \in \Sigma_\phi, F \in \mathcal{F}
			}.
		\end{align*}
	\end{lemma}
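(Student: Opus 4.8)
The statement is the Kalton--Weis theorem, a known and substantial result; I would prove it along the lines of \cite{KaltonWeis2001} and \cite{KunstmannWeis2004}, using the square-function machinery that accompanies a bounded $H^\infty$-calculus. \emph{Reductions.} First reduce to the case $\cF\subset H^\infty_0(\Sigma_\phi;\cB(X))$: fix a regularizing net $\rho_n\in H^\infty_0(\Sigma_\phi)$ with $\sup_n\Vert\rho_n\Vert_{H^\infty(\Sigma_\phi)}<\infty$ and $\rho_n\to1$ pointwise, for instance $\rho_n(\lambda)=\frac{n}{n+\lambda}\cdot\frac{\lambda}{\lambda+1/n}$. Then $\rho_nF\in H^\infty_0$, the family $\{\rho_n(\lambda)F(\lambda)\}$ has $\cR$-bound at most $\cR\{F(\lambda)\}$, and $\rho_n(A)F(A)=(\rho_nF)(A)\to F(A)$ strongly by the convergence lemma for the $H^\infty$-calculus together with the injectivity and dense range of $A$; since $\cR$-boundedness passes to strong operator limits, it suffices to bound $\cR\{(\rho_nF)(A):F\in\cF\}$ by $C\,\cR\{F(\lambda)\}$ \emph{uniformly in $n$}. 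Since $\HT$-spaces have finite cotype, $\cR$-boundedness may be tested with Gaussian variables, so one must estimate $\Vert\sum_j\gamma_jF_j(A)x_j\Vert_{L_2(\Omega;X)}$ against $\Vert\sum_j\gamma_jx_j\Vert_{L_2(\Omega;X)}$ for finitely many $F_j\in\cF$ and $x_j\in X$.

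Since $A$ has a bounded $H^\infty$-calculus and $X\in\HT(\alpha)$, one has the two-sided square-function estimate $\Vert x\Vert_X\simeq\Vert t\mapsto\psi(tA)x\Vert_{\gamma(\IR_+,\d t/t;X)}$ for any fixed $0\neq\psi\in H^\infty_0(\Sigma_{\phi'})$ with $\phi_A<\phi'<\phi$, with the same bounds for $A\otimes I$ on $L_2(\Omega)\otimes X$, and dually $h\mapsto\int_0^\infty\psi(tA)h(t)\,\d t/t$ is bounded $\gamma(\IR_+,\d t/t;X)\to X$. Moreover, property $(\alpha)$ upgrades the bounded $H^\infty$-calculus of $A$ to an $\cR$-bounded $H^\infty$-calculus (this implication may itself be quoted from the cited literature); in particular $\{\lambda(\lambda-A)^{-1}:\lambda\in\partial\Sigma_{\phi'}\}$ is $\cR$-bounded, and therefore the pointwise product of the two commuting $\cR$-bounded families $\{F(\lambda)\}$ and $\{\lambda(\lambda-A)^{-1}\}$, namely $\{F(\lambda)\lambda(\lambda-A)^{-1}:\lambda\in\partial\Sigma_{\phi'},\,F\in\cF\}$, is $\cR$-bounded with bound $\lesssim\cR\{F(\lambda)\}$. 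This last family is the point where the hypothesis on $\cF$ is genuinely exploited, since there $F$ enters only at the scalar parameter $\lambda$, which is what breaks the apparent circularity.

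For the main estimate, normalize $\psi$ so that $\int_0^\infty\psi(s)^2\,\d s/s=1$, giving McIntosh's reproducing identity $\int_0^\infty\psi(tA)^2\,\d t/t=\Id$ strongly. Using that each $F_j$ commutes with the resolvent of $A$ and hence with $\psi(tA)$,
\begin{align*}
\sum_j\gamma_jF_j(A)x_j=\int_0^\infty\psi(tA)\Big(\sum_j\gamma_j\,\psi(tA)F_j(A)x_j\Big)\frac{\d t}{t},
\end{align*}
and the dual square-function estimate on $L_2(\Omega;X)$ bounds the left-hand side by the $\gamma(\IR_+,\d t/t;L_2(\Omega;X))$-norm of $t\mapsto\sum_j\gamma_j\psi(tA)F_j(A)x_j$. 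One then inserts the Cauchy representation $\psi(tA)F_j(A)=\frac{1}{2\pi i}\int_{\partial\Sigma_{\phi'}}\psi(t\lambda)F_j(\lambda)(\lambda-A)^{-1}\,\d\lambda$, splits $\psi=\psi_1\psi_2$ with $\psi_1,\psi_2\in H^\infty_0$, and reorganizes the resulting double ($t$ and contour) integral so that the scalar dilation factors $\psi_i(t\lambda)$ and the resolvents are absorbed into bounded kernel and multiplier operators on the relevant $\gamma$-spaces — via the ideal property of $\gamma$-norms, the $L_2\to L_2$ boundedness of the convolution-type kernels produced by the homogeneity of $\psi_i(t\lambda)$, the square-function estimates, and the $\cR$-bounded $H^\infty$-calculus — while the operator factor $F_j(\lambda)$, now at scalar $\lambda$, is controlled by the $\cR$-boundedness of $\{F(\lambda)\}$ through the $\gamma$-multiplier theorem of Kalton--Weis, property $(\alpha)$ being what permits the Gaussian sum $\sum_j\gamma_j$ and the randomization underlying the $\gamma$-square-function to be interchanged. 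All constants so produced are independent of $n$: every kernel that occurs is dilation-homogeneous, so the integrals in question satisfy $\int_0^\infty w(ts)\,\d s/s=\int_0^\infty w(r)\,\d r/r$ and contribute factors that do not see the regularizer $\rho_n$. Passing to the limit $n\to\infty$ then yields $\cR(\cF(A))\le C\,\cR\{F(z):z\in\Sigma_\phi,\,F\in\cF\}$.

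The crux is the reorganization of the double integral in the previous paragraph: one must retain enough decay in each integration variable for every kernel and multiplier operator to be bounded, while never estimating $F_j(\lambda)$ more crudely than through the $\cR$-bound of $\{F(\lambda)\}$ — any step that reverts to $\Vert F_j(\lambda)\Vert$, let alone to $\Vert F_j(A)\Vert$, either reintroduces a dependence on the individual $F_j$ (destroying the uniformity in $n$, since the regularizers $\rho_n$ tend to $1$ and so have no uniform decay) or becomes circular, and must be circumvented. Property $(\alpha)$ enters essentially twice, in the upgrade from a bounded to an $\cR$-bounded $H^\infty$-calculus and in the interchange of the two randomizations; the former is best invoked from \cite{KaltonWeis2001} or \cite{KunstmannWeis2004} rather than reproved here.
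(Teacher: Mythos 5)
The paper offers no proof of this lemma; it is quoted directly from \cite{KaltonWeis2001}, \cite{KunstmannWeis2004} and \cite{PrussSimonett2016}, and your sketch reproduces exactly the standard argument found in those references (reduction to $H^\infty_0$ by regularization, McIntosh's reproducing formula and square-function estimates, the property-$(\alpha)$ upgrade to an $\cR$-bounded $H^\infty$-calculus, and control of $F(\lambda)$ only through the $\cR$-bound of $\{F(\lambda)\}$ at scalar points). Your approach is therefore essentially the same as the paper's, which simply defers to the cited literature.
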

Lemma \ref{lem_Kalton_Weis} also implies each operator admitting bounded $H^\infty$-calculus belongs to $\mathcal{R}H^\infty$ provided that $X$ is of class $\mathcal{HT} (\alpha$). 

\section{Solvability in the Maximal Regularity Space} \label{Sec: solvability}

\subsection{Reduction to $f= 0$ and $\rho_0=0$}\label{subsec: Reduction}
	
We first consider our problem on the half space $\Real^{n}_+$ and assume the differential operators have constant coefficients without lower order.
Let $E_{\IR^n}$ be the zero extension operator from $L_p(J; L_q(\IR^n_+; E))$ to $L_p(J; L_q(\IR^n; E))$.
It follows from the Mikhlin theorem that there exist a unique solution $u_\ast$ to 
	\begin{align*}
		\eta u 
		+ \cA u 
		= E_{\IR^n}f \quad (t \in J, \,  x \in \IR^n),
	\end{align*}
for $f \in L_p (J ; L_q (\Real^n_+ ; E))$ such that
		\begin{align*}
			\eta \Vert
				u_\ast
			\Vert_{L_p (J ; L_q (\Real^{n} ; E))}
			+ \Vert
				u_*
			\Vert_{ L_p (J; W^{2m}_q (\Real^n; E))}
			\leq C \Vert
				f
			\Vert_{L_p ( J ; L_q (\Real^n_+ ; E) )}.
		\end{align*}
Let $\rho_0 \in B^{2m \kappa_0 + l (1 - 1 / p)}_{q, p} ( \Real^{n-1} ; F )$.
Then we also find a unique solution $\rho_\ast$ via maximal regularity of  to $(\eta - \Delta)^{l/2}$ that
	\begin{align*}
		\begin{array}{rll}
			\partial_t \rho 
			+ (\eta - \Delta)^{l/2} \rho 
			& = 0 \quad 
			&(t \in \Real_+, x \in \IR^{n-1}), \\
			\rho(0) 
			&= \rho_0 \quad 
			&(x \in \IR^{n-1})
		\end{array}
	\end{align*} 
such that
	\begin{align*}
		\Vert
			\rho_\ast
		\Vert_{
			W^1_p (\Real_+ ; W^{2 m \kappa_0}_q (\Real^{n - 1} ; F)) 
			\cap L_p(\IR_+; W^{l + 2m\kappa_0}_{q}(\IR^{n-1}; F))
		}
		\leq C \Vert
			\rho_0
		\Vert_{B^{2m \kappa_0 + l (1 - 1 / p)}_{q, p} (\Real^{n-1} ; F)}.
	\end{align*}
For the solution $(u, \rho)$ to (\ref{eq_qs}), if we put $(\tilde{u}, \tilde{\rho}) := (u - u_\ast, \rho - \rho_\ast)$, then $(\tilde{u}, \tilde{\rho})$ satisfies
	\begin{align} \label{eq_reduced_qs}
		\begin{array}{rll}
			\eta \tilde{u}
			+ \cA \tilde{u} 
			&= 0 \quad 
			&(t\in J,~ x\in G), \\
			\partial_t \tilde{\rho}  
			+\cB_0 \tilde{u} + \cC_0 \tilde{\rho} 
			&= g_0 
			- (\partial_t \rho_\ast  
			+\cB_0 u_\ast + \cC_0 \rho_\ast) \quad 
			&(t\in J,~ x\in \g), \\
			\cB_j \tilde{u} 
			+ \cC_j \tilde{\rho} 
			&= g_j 
			-  (
				\cB_j u_\ast 
				+ \cC_j \rho_\ast
			)\quad 
			&(t\in J,~ x\in \g,~j = 1, \cdots, m), \\
			\tilde{\rho}(0,x) 
			&= 0 \quad 
			&(x\in \g).
		\end{array}
	\end{align}
Note that $g_0 - (\partial_t \rho_\ast  +\cB_0 u_\ast + \cC_0 \rho_\ast) \in Y_0$ and $g_j -  (\cB_j u_\ast + \cC_j \rho_\ast) \in Y_j$. 
Conversely, the solution of the original equations is given by $(u, \rho) := (\tilde{u} + u_\ast, \tilde{\rho} + \rho_\ast)$.
Thus, it suffice to consider the case of $f = 0$ and $\rho_0 = 0$ from now on. 

\subsection{Partial Fourier transform and solution formula on the half space}\label{subsec: FT}
	
We continue to consider the case of the half space and assume  differential operators having constant coefficients without lower order terms.
Assume that $(u, \rho)$ are solutions to (\ref{eq_qs}) with $f = 0$ and $\rho_0 = 0$.
Put
	\begin{align*}
		v = \Laplace_t \Fourier_{x^\prime} u,
		\quad \sigma = \Laplace_t \Fourier_{x^\prime} \rho.
	\end{align*}
Then $(v, \rho)$ satisfy
	\begin{align} \label{eq_ODE}
		\left\{
			\begin{aligned}
				\eta v + \cA(\xi', D_y) v 
				&= 0, \quad 
				&\,  \\
				\cB_0(\xi', D_y) v(0) 
				+ (\lambda + \cC_0(\xi')) \sigma 
				&= h_0, &\\
				\cB_j(\xi', D_y) v(0) + \cC_j(\xi') \sigma &= h_j \qquad 	
				&(j=1,\cdots, m),
			\end{aligned}
		\right.
	\end{align}
where $h_j = \Laplace_t \Fourier_{x^\prime} g_j$ for $j = 0, \cdots , m$.
	
The Lopatinskii--Shapiro condition (LS) ensures, for each $(\lambda, \xi') \in (\DomainOfLambda\times\IR^{n-1} ) \setminus \{(0,0)\}$ ($\theta > \pi /2$) and for any  $\{h_j\}_{j=0}^m \in F \times E^m$, there exists a unique solution 
	\begin{align*}
		v \in C^{2m}_0 (\IR_+; E),
		\quad \sigma \in F
	\end{align*}
to (\ref{eq_ODE}). 
We derive the solution operator of Fourier multiplier type and show its  boundedness. 
As \cite{DenkHieberPruss2003,Denk-Pruss-Zacher2008,PrussSimonett2016}, we construct the solution formula. 
By definition of $\mathcal{A}$ and $\mathcal{B}_j$, 
	\begin{align*}
		\cA(\xi' , D_y) 
		= \sum_{k=0}^{2m} a_k(\xi') D_y^{2m-k}, \quad 
		\cB_j(\xi' , D_y) 
		= \sum_{k=0}^{m_j} b_{jk}(\xi') D_y^{m_j-k}, 
	\end{align*}
where $a_k(\xi')$ and $b_{jk}(\xi')$ are homogeneous of degree $k$. 
Set
	\begin{align*}
		\mu 
		= (\eta + \abs{\xi^\prime}^{2m})^{1/2m}, \quad 
		b 
		= |\xi'|/\mu , \quad 
		\zeta 
		= \xi'/ \mu , \quad 
		a 
		= \eta/\mu^{2m}
	\end{align*}
and $w:= (w_1, \cdots, w_{2m})^T$ for
	\begin{align*}
		w_k 
		:= \left(
			\frac{1}{\mu} D_y
		\right)^{k-1}v \quad (k=1, \cdots, 2m). 
	\end{align*}
Note that $(\mu, \zeta, a) \in [\eta^{1/2m}, \infty) \times B_{\Real^{n-1}} (0 \, ; 1) \times (0 ,1] $, where $B_{\Real^d}(c;r)$ is the $d$-dimensional open ball with center $c$ and radius $r$.
Then the first equation of (\ref{eq_ODE}) is equivalent to
	\begin{align} \label{eq_ODE_w}
		D_y w 
		= \mu A_0(\zeta,  a) w,
	\end{align}
where
	\begin{align*}
		A_0(\zeta, a) 
		:= \begin{pmatrix}
			0 &  I & 0 & \cdots & 0 \\
			0 & 0 & I & \cdots & 0 \\
			\vdots & \vdots & \vdots & \vdots & \vdots \\
			0 & 0 & 0 & 0 & I \\
			c_{2m} &  c_{2m-1} &  \cdots &  c_2 &  c_1 &  
		\end{pmatrix}, 
		\end{align*}
and
	\begin{align*}
		c_j 
		&:= c_j (\zeta) := - a_0^{-1} a_j(\zeta) \qquad (j=1,\cdots, 2m-1),\\
		c_{2m} 
		&:= c_{2m}(\zeta, a) := - a_0^{-1}(a_{2m}(\zeta) + a).
	\end{align*}
Actually, it follows from the first equation of (\ref{eq_ODE}) and definition of $w_j$ that
	\begin{align*}
		\quad (1/\mu) D_y w_{2m} 
		= - a_0^{-1}\left(
			a_{2m} (\xi'/\mu) 
			+ \eta/\mu^{2m} 
		\right) w_1 
		- \sum_{k=2}^{2m} a_0^{-1} a_{2m-k+1} (\xi'/\mu) w_k. 
	\end{align*}
Thus, we find (\ref{eq_ODE_w}) from the definition of $w$.
Moreover, (\ref{eq_ODE_w}) implies
	\begin{align*}
		w(y) 
		:= e^{\mu i A_0(\zeta, a) y} w (0) \quad (y \ge 0).
	\end{align*}
We write $w(0) = w|_{y = 0}$ for simplicity.
The functions $w (y)$ have to be determined so that tends to zero at infinity.
This is guaranteed by 
	\begin{align*}
		P_+(\zeta, a) w_0 
		= 0,
	\end{align*} 
where $P_+( \zeta, a) \in \cB(E^{2m})$ is the associated positive spectral projection with $i A_0(\zeta, a)$. 
Note that each spectrum of $i A_0(\zeta, a))$ do not lie on the imaginary axis and $P_+$ is holomorphic and bounded uniformly in $( \zeta, a)$ by the Lopatinskii--Shapiro condition since $( \zeta, a)$ run a compact set away from $(0, 0)$.
Supremum of real part of negative spectrum of $i A (\zeta, a)$ is less than zero and infimum of real part of positive spectrum is larger than zero, this facts imply $e^{\mu i A_0 (\zeta, a) y} w (0) \rightarrow 0$ for $w (0)$ satisfying $P_+ (\zeta, a) w (0) = 0$ as $ y \rightarrow \infty$.
See \cite{DenkHieberPruss2003} and \cite{PrussSimonett2016} for details of the above discussion.
	
Let $w := \Fourier_{x^\prime} \Laplace_{t} h$ for $h \in L_p (J; W^{2m - 1 / p}_q (\Real^{n-1}; E^{2m}))$.
Define the canonical extension of functions from the boundary to the half space by 
	\begin{align}
		\mathcal{T} h
		:= \LaplaceInverse_{\lambda} \FourierInverse_{\xi^\prime} 
		\left[
			\mu^{2m} e^{\mu i A_0(\zeta, a) y} \left(
				I - P_+ (\zeta, a)
			\right)
			w
		\right].
	\end{align}
Boundedness of this extension operator is ensured by the following
	\begin{proposition} \label{prop_boundedness_of_canonical_extension}
Let $1 < p, q < \infty$, $\eta \in \Sigma_\varphi$ for small $\varphi > 0$ and $J = \Real_+$.
Then there exists a constant $C>0$ such that
		\begin{align}
			\Vert
				\mathcal{T} h
			\Vert_{L_p (J; L_q (\Real^{n}_+; E^{2m}))}
			\leq C \Vert
				h
			\Vert_{L_p (J; W^{2m - 1 / p}_q (\Real^{n-1}; E^{2m}))}.
		\end{align}
for $h \in L_p (J; W^{2m - 1 / p}_q (\Real^{n-1}; E^{2m}))$.
	\end{proposition}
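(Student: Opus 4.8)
The plan is to view $\mathcal{T}$ as a composition of an operator-valued Fourier multiplier in $(\lambda, \xi')$ acting on the boundary data, followed by the integration in the normal variable $y$, and to estimate the $L_q(\Real^n_+)$-norm by reducing to a scalar multiplier bound uniformly in $y$ together with an $L_q(\Real_+)$-integrability estimate in $y$. More concretely, I would first rescale: writing $\mu = (\eta + |\xi'|^{2m})^{1/2m}$, the symbol $\mu^{2m} e^{\mu i A_0(\zeta,a)y}(I - P_+(\zeta,a))$ depends on $\xi'$ only through the compact-set parameters $(\zeta, a) = (\xi'/\mu, \eta/\mu^{2m})$ and on the scalar $\mu$. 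By (LS), $P_+(\zeta,a)$ and $I - P_+(\zeta,a)$ are holomorphic and uniformly bounded on that compact parameter set, and the negative spectrum of $iA_0(\zeta,a)$ has real part bounded above by some $-c_0 < 0$ uniformly; hence $\|e^{\mu i A_0(\zeta,a)y}(I - P_+(\zeta,a))\|_{\cB(E^{2m})} \le C e^{-c_0 \mu y}$.

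The key step is then the operator-valued Lizorkin/Mikhlin multiplier theorem (the Proposition on operator-valued Fourier multipliers of Lizorkin type, applicable since $E^{2m}$ is a Hilbert space, hence of class $\HT(\alpha)$, and uniformly bounded families on Hilbert spaces are automatically $\cR$-bounded). For each fixed $y \ge 0$ one checks that
\begin{align*}
	m_y(\lambda, \xi') := \mu^{2m - (2m - 1/p)} \, e^{\mu i A_0(\zeta,a) y} \left( I - P_+(\zeta,a) \right)
\end{align*}
— the symbol obtained after absorbing the smoothing weight $\langle \xi' \rangle^{-(2m - 1/p)}$ implicit in the $W^{2m-1/p}_q$-norm, together with the corresponding fractional power in $\lambda$ from the $L_p(J;\cdot)$ side — satisfies the required bounds on $\xi^\alpha \partial_\xi^\alpha m_y$ and on the joint $(\lambda,\xi')$-derivatives, with constant $C(1 + \mu y)^{N} e^{-c_0 \mu y}$ that is integrable in $\mu y$. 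Here each $\xi'$-derivative either hits the compact-set parameters $(\zeta,a)$ (harmless, by holomorphy and uniform bounds from (LS)), or produces a factor $\mu y$ from differentiating the exponential $e^{\mu i A_0 y}$, or a factor $\mu^{-1}$ times powers of $\mu$ from $\partial_{\xi'}\mu$; all such factors are controlled by the exponential decay. Applying the multiplier theorem for each $y$ gives
\begin{align*}
	\Vert (\mathcal{T}h)(\cdot,\cdot,y) \Vert_{L_p(J; L_q(\Real^{n-1}; E^{2m}))} \le C\, \mu_\ast(y) \Vert h \Vert_{L_p(J; W^{2m-1/p}_q(\Real^{n-1}; E^{2m}))},
\end{align*}
where $\mu_\ast(y)$ encodes an $L_q(\Real_+)$-integrable decay in $y$ coming from the bound $\mu^{1/q'} e^{-c_0 \mu y}$ integrated against $\mu \gtrsim \eta^{1/2m}$; raising to the $q$-th power and integrating in $y \in \Real_+$ then yields the claim by Fubini/Minkowski. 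Since the $\xi'$-independent constant $\eta$ only enters through $\mu \ge \eta^{1/2m} > 0$ and everything is holomorphic in $\lambda$ on $\Sigma_\theta$, the argument extends to $\eta \in \Sigma_\varphi$ for small $\varphi$ without change.

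The main obstacle I anticipate is the careful bookkeeping of the weights: tracking precisely how the $W^{2m-1/p}_q$ scaling on $\Real^{n-1}$ and the $L_p$ scaling in $t$ combine, via Laplace–Fourier transform, to produce exactly the power of $\mu$ that makes $\int_0^\infty (\text{symbol bound})^q \, dy < \infty$ — i.e. verifying that the exponent $2m - (2m - 1/p) = 1/p$ together with the extra normal-integration gain $1/q$ is correctly balanced so that no logarithmic or divergent factor in $\mu$ survives. This is the standard "half-derivative from the trace" accounting as in \cite{DenkHieberPruss2003, PrussSimonett2016}, but it has to be done with the operator $A_0(\zeta,a)$ in place of a scalar symbol, so one must be sure the projection $I - P_+(\zeta,a)$ and its derivatives never spoil the decay — which is exactly where (LS), guaranteeing the spectral gap and holomorphy on the compact parameter set, is used decisively. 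Once the multiplier estimates are in hand uniformly in $y$ with integrable decay, the rest is a routine application of Fubini's theorem.
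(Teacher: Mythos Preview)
The paper itself does not give a proof but simply refers to \cite{DenkHieberPruss2003}, Section~7. Comparing your outline to the argument there, two issues arise.

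First, a minor simplification you are missing: the symbol $\mu^{2m} e^{\mu i A_0(\zeta,a)y}(I-P_+(\zeta,a))$ is independent of $\lambda$, since $\mu=(\eta+|\xi'|^{2m})^{1/2m}$ depends only on $\xi'$. Hence $\Laplace_t$ and $\LaplaceInverse_\lambda$ cancel, $\mathcal{T}$ acts pointwise in $t$, and the $L_p(J;\cdot)$-estimate follows immediately from the purely spatial one --- there are no ``joint $(\lambda,\xi')$-derivatives'' or fractional powers in $\lambda$ to track.

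Second, and this is a genuine gap: the scheme ``apply the Fourier multiplier theorem for each fixed $y$, obtain a bound $\mu_\ast(y)$, then take the $L_q$-norm in $y$'' does not close at the sharp trace exponent. After absorbing the Bessel potential of order $2m-1/q$ (the exponent the trace theorem actually produces; the $1/p$ in the displayed statement appears to be a misprint, cf.\ the definition of $\kappa_j$), the remaining scalar size of the symbol for fixed $y$ is $\mu^{1/q} e^{-c_0\mu y}$, whose Mikhlin constant satisfies
\[
\mu_\ast(y)\ \sim\ \sup_{\mu\ge \eta^{1/2m}}\mu^{1/q}e^{-c_0\mu y}\ \sim\ y^{-1/q}\qquad(y\to 0),
\]
so that $\int_0^1 \mu_\ast(y)^q\,dy=\int_0^1 y^{-1}\,dy=\infty$. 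The $\xi'$- and $y$-integrations cannot be decoupled by a pointwise multiplier bound at the critical regularity; your ``$\mu^{1/q'}e^{-c_0\mu y}$ integrated against $\mu$'' does not correspond to any step that is actually available.

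The argument in \cite{DenkHieberPruss2003} (see also \cite{PrussSimonett2016}) instead recognises the extension as the action of an analytic semigroup: with $L$ the sectorial operator on $L_q(\Real^{n-1};E^{2m})$ whose symbol is essentially $\mu$ restricted to the stable subspace $\Rg(I-P_+)$, one has $\mathcal{T}h=L^{2m}e^{-Ly}h$ up to a bounded multiplier in $(\zeta,a)$. The trace/extension theorem for analytic semigroups --- equivalently, the real-interpolation identity characterising $(X,D(L))_{1-1/q,q}$ as exactly those $h$ with $Le^{-L\cdot}h\in L_q(\Real_+;X)$ --- then delivers the bound from $W^{2m-1/q}_q(\Real^{n-1})$ into $L_q(\Real^n_+)$ directly. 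This is what produces the exponent $1/q$ correctly, and it is precisely the mechanism that replaces your failed $y$-by-$y$ estimate.
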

	\begin{proof}
See \cite{DenkHieberPruss2003}, section 7.
	\end{proof}
Put $w^0 = w (0)$.
Let us continue to seek the solution formula of $(w^0, \sigma)$.
Since  
	\begin{align*}
		\mathcal{B}_j v
		= \sum_{k=0}^{m_j} b_{jk}(\xi') \mu^{m_j - k} w_{m_j - k + 1}
		= \sum_{k=0}^{m_j} b_{jk}(\zeta) \mu^{m_j} w_{m_j - k + 1}
	\end{align*}
and
	\begin{align*}
		\mathcal{C}_j (\xi^\prime) \sigma
		= \mathcal{C}_j (\zeta) \mu^{k_j} \sigma
	\end{align*}
the second and the third equations of (\ref{eq_ODE}) are equivalent to 
	\begin{align}
		B_0 (\zeta) w^0 
		+ \left\{ \lambda \mu^{-m_0} 
		+ \cC_0 (\zeta) \mu^{-m_0 + k_0} \right\} \sigma 
		&= \mu^{-m_0} h_0, \nonumber\\
		B_j (\zeta) w^0 
		+ \cC_j (\zeta) \mu^{-m_j + k_j} \sigma 
		&= \mu^{-m_j} h_j \quad (j=1, \cdots, m), \label{eq_LS}\\ 
		P_
		+ (\zeta, a) w^0 
		&= 0, \nonumber
	\end{align}
where $B_j(\zeta) := (b_{jm_j}(\zeta), \cdots, b_{j0}, 0, \cdots,0)$ for $j=0, \cdots, m$. 
Note that by the assumptions on (E) and (LS) the above equations (\ref{eq_LS}) admit a unique solution 
	\begin{align*}
		(w^0, \sigma) \in E^{2m} \times F
	\end{align*}
for each $(\lambda, \xi) \in (\DomainOfLambda\times\IR^{n-1}) \setminus\{(0,0)\} $ ($\theta > \pi / 2$) and $\{h_j\}_{j=0}^m \in F \times E^m$.  
Introducing 
	\begin{align*}
		\sigma^0 
		:= (\lambda + \mu^l) \mu^{-m_0} \sigma, \quad
		h 
		:= (h_j^0)_{j=0}^m := (\mu^{-m_j} h_j)_{j=0}^m,
	\end{align*} 
we rewrite (\ref{eq_LS}) into
	\begin{align} 
			\begin{array}{rlr}
			\displaystyle B_0 (\zeta) w^0
			+ \frac{
				\lambda 
			+ \cC_0 (\zeta) \mu^{l_0}
			}{
				\lambda + \mu^l
			} \sigma^0 
			&= h_0^0,
			& \, \\ [2ex]
			\displaystyle B_j (\zeta) w^0 
			+ \frac{
				\cC_j (\zeta) \mu^{l_j}
			}{
				\lambda + \mu^l
			} \sigma^0 
			&= h_j^0 
			& (j=1, \cdots, m), \\ [2ex]
			P_+ (\zeta, a) w^0 
			&= 0.
			& \,  
		\end{array}
	\end{align}
Thus, it follows
	\begin{align} \label{eq_LS_w0_sigma0}
		\begin{array}{rlr}
			\displaystyle B_0 (\zeta) w^0
			+ \frac{
				\nu
			+ \cC_0 (\zeta)  \eta^{- (l_0 - l)  / 2m} \tilde{a}^{ l - l_0 }
			}{
				\nu + 1
			} \sigma^0 
			&= h_0^0,
			& \, \\ [2ex]
			\displaystyle B_j (\zeta) w^0 
			+ \frac{
				\cC_j (\zeta) \eta^{- (l_j - l)  / 2m} \tilde{a}^{ l - l_j}
			}{
				\nu + 1
			} \sigma^0 
			&= h_j^0 
			& (j=1, \cdots, m), \\ [2ex]
			P_+ (\zeta, \tilde{a}^{2m}) w^0 
			&= 0.
			& \,  
		\end{array}
	\end{align}
for $\nu = \lambda / \mu^l$ and $\tilde{a} = a^{1/2m}$.
We write the solution to (\ref{eq_LS_w0_sigma0}) as 
	\begin{align*}
		w^0 
		:= M_w^0 (\zeta, \tilde{a}, \nu) h, \quad 
		\sigma^0 
		:= M_\sigma^0 (\zeta, \tilde{a}, \nu) h. 
	\end{align*}
Set $Y_E := L_p (J; W^{2m-1/p}_{q}(\IR^{n-1}; E))$ and $Y_F:= L_p(J; W^{2m-1/p}_{q}(\IR^{n-1}; F))$.
	\begin{proposition} \label{prop_boundedness_multiplier_operator}
Let $1 < p, q < \infty$, $\eta > 0$ and $G = \Real^n_+$.
Assume assumptions $(E)$, $(LS)$ and $(ALS)$ hold.
Then, there exist a positive constant $C>0$, it holds that
		\begin{align*}
			\Vert
				\LaplaceInverse_{\lambda} \FourierInverse_{\xi^\prime}
				\left[
					\left( 
						M_w^0, M_\sigma^0
					\right)  \Fourier_{x^\prime} \Laplace_t 
				\right]
			\Vert_{\mathcal{B}
				\left(
					Y_F \times Y^m_E
					; Y^{2m}_E \times Y_F
				\right)
				}
			& \leq C.
		\end{align*}
	\end{proposition}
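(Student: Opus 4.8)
The plan is to establish the multiplier estimate via the operator-valued Lizorkin multiplier theorem, after rewriting everything in terms of the scaled variables $(\zeta,\tilde a,\nu)$. First I would observe that $(M_w^0, M_\sigma^0)$ are obtained by inverting the finite-dimensional (in the $w^0$-block together with $\sigma^0$) linear system \eqref{eq_LS_w0_sigma0}, so by Cramer's rule they are rational expressions in the entries of $B_j(\zeta)$, $\cC_j(\zeta)$, the projection $P_+(\zeta,\tilde a^{2m})$, and $\nu$, divided by the determinant of the system. The key structural point is that the parameters range over a set where compactness can be exploited: $\zeta\in B_{\IR^{n-1}}(0;1)$, $\tilde a\in(0,1]$ (so that $\tilde a^{2m}=a\in(0,1]$), and $\nu=\lambda/\mu^l$ with $\lambda\in\Sigma_\theta$. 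One must show the determinant stays bounded away from zero uniformly, and that both the numerators and $1/\det$ are smooth with the correct Lizorkin-type derivative bounds in the genuine Fourier variable $\xi'$ and Laplace variable $\lambda$.

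The order of steps I would carry out is as follows. (1) Record that $P_+(\zeta,a)$ is holomorphic and bounded uniformly for $(\zeta,a)$ in a compact set away from $(0,0)$, as already noted after \eqref{eq_LS_w0_sigma0}; since $(\zeta,\tilde a^{2m})$ also stays in such a compact set, this holds for our parametrization. (2) Show non-degeneracy of the coefficient matrix of \eqref{eq_LS_w0_sigma0}: for $\nu\in\Sigma_{\theta'}\cup\{0\}$ with $|\zeta|^2+\tilde a^{2m}=1$ on the unit sphere this is exactly the asymptotic Lopatinskii--Shapiro condition (ALS) — note the factors $\eta^{-(l_j-l)/2m}\tilde a^{l-l_j}$ become the Kronecker-delta terms $\delta_{l,l_j}\cC_{j\#}$ in the limit $\tilde a\to 0$, and the genuine (LS) condition covers the interior of the parameter region. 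A compactness/continuity argument then yields a uniform lower bound on $|\det|$. Careful treatment of the boundary cases $\tilde a=0$ and $\nu=0$, and of the degeneration as $|\xi'|\to0$ or $|\xi'|\to\infty$ (where $b=|\xi'|/\mu$ approaches $0$ or $1$), is where (ALS) is indispensable. (3) Translate back: the multiplier as a function of $(\xi',\lambda)$ is a composition of the bounded, smooth function of $(\zeta,\tilde a,\nu)$ with the maps $\xi'\mapsto(\zeta,\tilde a)$ and $(\xi',\lambda)\mapsto\nu$; apply the chain rule and the homogeneity relations ($\mu$ homogeneous of degree $1$ in $(\eta^{1/2m},\xi')$, etc.) to verify that $\xi^\alpha\partial_\xi^\alpha$ and $\lambda\partial_\lambda$ of the multiplier remain bounded, hence the relevant family is $\cR$-bounded since $E$ and $F$ are Hilbert spaces and bounded families on Hilbert spaces are automatically $\cR$-bounded. (4) Invoke the Lizorkin-type operator-valued Fourier multiplier theorem (in the $(t,x')$ variables, using that $L_p(J;W^{s}_q(\IR^{n-1};\cdot))$ is of class $\HT(\alpha)$) to conclude $\LaplaceInverse_\lambda\FourierInverse_{\xi'}[(M_w^0,M_\sigma^0)\Fourier_{x'}\Laplace_t]$ is bounded on the stated spaces, with the additional $2m-1/p$ derivatives on the data side absorbed into the multiplier as extra factors of $\mu^{2m-1/p}$ matched against the powers appearing in the definition of $h=(\mu^{-m_j}h_j)$ and the trace-space norms.

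The main obstacle I expect is Step (2): proving the uniform lower bound $|\det|\ge c>0$ over the full closure of the parameter region. The (LS) hypothesis gives invertibility only for $(\lambda,\xi')$ away from the origin and with $\tilde a>0$ strictly, while the multiplier must be controlled up to the degenerate limits $\tilde a\to 0$ (i.e. $\eta\ll|\xi'|^{2m}$), $\nu\to0$, and $|\zeta|\to0$ or $|\zeta|\to1$. One has to verify that the rescaled system \eqref{eq_LS_w0_sigma0} extends continuously to these boundary faces and that on each face the limiting system is precisely one of the ODE problems appearing in (ALS) — in particular that the coefficients $\cC_j(\zeta)\eta^{-(l_j-l)/2m}\tilde a^{l-l_j}$ degenerate correctly to $\delta_{l,l_j}\cC_{j\#}$ — so that (ALS) supplies invertibility there. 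Once continuity up to the boundary and pointwise invertibility on the (compact) closure are in hand, the uniform bound follows by a standard argument, and the derivative estimates in Step (3) reduce to bookkeeping with the chain rule and homogeneity, as in \cite{DenkHieberPruss2003,DenkPrussZacher2008,PrussSimonett2016}.
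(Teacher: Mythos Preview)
Your overall plan---reduce to the scaled variables $(\zeta,\tilde a,\nu)$, use (LS) for invertibility in the interior of the parameter region, use (ALS) on the degenerate boundary faces, and then pass from uniform boundedness to $\cR$-boundedness via the Hilbert-space structure---matches the paper's argument and is correct. The case analysis you anticipate in Step~(2) is exactly what the paper carries out: $|\nu|\to\infty$ with $\tilde a\to 0$ (third ALS), $|\nu|\to\infty$ with $\tilde a$ bounded away from zero (first ALS), and $\nu\to c\in\Sigma_\theta\cup\{0\}$ forcing $\tilde a\to 0$ (second ALS).

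The one genuine discrepancy is your Step~(4). You propose to invoke the Lizorkin multiplier theorem jointly in the $(t,x')$ variables, but the time variable here is governed by the Laplace transform on $J$, not by a Fourier transform on $\IR$; a direct joint Mikhlin/Lizorkin argument in $(\tau,\xi')$ would require extending to $t\in\IR$ and justifying the passage from $\lambda\in\Sigma_\theta$ to $\lambda=i\tau$, together with mixed-derivative bounds you do not address. The paper instead decouples the two variables: first it applies the operator-valued Fourier multiplier theorem in $\xi'$ alone, with $\nu$ held as a parameter, obtaining operators $M^0(L_1,L_2,\nu)$ (where $L_1=-i\nabla'(\eta+(-\Delta')^m)^{-1/2m}$, $L_2=\eta^{1/2m}(\eta+(-\Delta')^m)^{-1/2m}$) that form an $\cR$-bounded analytic family in $\nu\in\Sigma_\theta$. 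Then it invokes the Kalton--Weis theorem (Lemma~\ref{lem_Kalton_Weis}) to substitute the sectorial operator $L_3=\partial_t(\eta+(-\Delta')^m)^{-l/2m}$, which has bounded $H^\infty$-calculus, for $\nu$. This two-step route is how the Laplace variable is actually handled; your proposal is missing this Kalton--Weis step, and without it the argument for the $t$-direction is incomplete.
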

	\begin{proof}
Analyticity of $\left( M_w^0, M_\sigma^0 \right)$ on a open set $D_\zeta \times D_{\tilde{a}} \times \DomainOfLambda \supset B_{\Real^{n-1}} (0 \, ; 1) \times (0,1]  \times \DomainOfLambda$ ($\theta > \pi / 2$) is guaranteed by (LS) and analyticity of $B_j, \, \cC_j$ for $j = 0 , \cdots , m$.
Boundedness of $(M_w^0, M_\sigma^0)$ is equivalent to the solvability for $\zeta \in \overline{B_{\Real^{n-1}}(0 ; 1)}$, $\tilde{a} \in [0, 1]$ and $ \nu \in \DomainOfLambda \cup \{\infty\} $ ($\theta > \pi / 2$).
The solvability of $(M_w^0, M_\sigma^0)$ in the case of $\mu \neq \infty$ and $\lambda \neq \infty$ is guaranteed by (LS).
We need to control behaviour of $(M_w^0, M_\sigma^0)$ on $\mu$ and $\lambda$ at infinity.
Let us consider the case of $| \mu |\to \infty$ or $|\lambda|\to \infty$. 
We find
	\begin{align*}
			\frac{
			\eta^{- (l_0 - l)  / 2m} \tilde{a}^{ l - l_j }
			}{
				\nu + 1
			}
			\to \left \{
				\begin{array}{crl}
					0 
					&\quad {\rm if}
					& |\lambda|/ \abs{\mu}^l \to \infty, \\
					\frac{\delta_{l, l_j}}{c+1} 
					&\quad \mathrm{if}
					& \lambda/ \mu^l \to c, 
				\end{array}
			\right.
 		\end{align*}
and		
		\begin{align*}
			\frac{
				\nu
			}{
				\nu + 1
			}
			\to \left \{
				\begin{array}{crl}
					1 
					&\quad \mathrm{if} 
					& |\lambda|/ \abs{\mu}^l \to \infty, \\
					\frac{c}{c+1} 
					&\quad \mathrm{if}
					& \lambda/ \mu^l \to c, \\
				\end{array}
			\right.
		\end{align*}
for some $c \in \DomainOfLambda\cup\{0\}$.
Let us consider the case (i) (\textbf{$|\lambda|/ \abs{\mu}^l \to \infty$}).
The limit problem of this case is
		\begin{align}
			\begin{array}{rll}
				B_0 (\zeta) w^0
				+  \sigma^0 
				&= h_0^0,
				& \, \\
				B_j (\zeta) w^0  
				&= h_j^0 
				& (j=1, \cdots, m), \\ 
				P_+ (\zeta, a_\ast) w^0 
				&= 0.
				& \,  
			\end{array}
		\end{align}
for some $a_\ast\in[0,1]$ which is the limit of $\tilde{a}^{2m}$.
If $\mu$ tend to infinity at the same time, i.e. $a_\ast=0$, this system corresponds to the following problem; for all $\{h_j^0\}_{j=0}^m \in F\times E^m$ and for any $\xi'\in \mathbb{S}^{n-2}$, 
		\begin{align*}
			\cA(\xi', D_y) v(y) 
			&= 0 \quad (y>0), \\
			\mathcal{B}_0 (\xi^\prime, D_y) v^0
			+ \sigma^0
			& = h_0^0, \\
			\cB_{j}(\xi', D_y) v^0 
			&= h_j^0 \quad (j=1, \cdots , m),
		\end{align*}
admits a unique solution $v \in C_0^{2m}(\IR_+; E)$, which is guaranteed by the third asymptotic Lopatinskii--Shapiro condition.
On the other hand, if $\mu$ is still finite, i.e. $a_\ast\in(0,1]$, the corresponding problem is given by
		\begin{align*}
			\eta v (y) +
			\cA(\xi', D_y) v(y) 
			&= 0 \quad (y>0), \\
			\mathcal{B}_0 (\xi^\prime, D_y) v^0
			+ \sigma^0
			& = h_0^0, \\
			\cB_{j}(\xi', D_y) v^0 
			&= h_j \quad (j=1, \cdots , m) ,
		\end{align*}
for all $\eta>0$, $\xi^\prime \in \Real^{n-1}$ and $\{h_j^0\}_{j=0}^m \in F\times E^m$.
This problem is solvable by the first asymptotic Lopatinskii--Shapiro condition.
Next we consider case (ii) ($\lambda/\mu^l \to c \in \DomainOfLambda\cup\{0\}$).
In these cases, $a_\ast=0$ and the limit problem is
	\begin{align} \label{eq_asymptotic_LS_lambda_sim_xi}
		\begin{array}{rll} \displaystyle
			\cB_0 (\zeta) w^0 
			+ \frac{c + \delta_{l, l_0}\cC_0(\zeta)}{c+1} \sigma^0 
			&= \, h_0^0, 
			& \, \\ \displaystyle
			\cB_j (\zeta) w^0 
			+ \frac{\delta_{l,l_j}\cC_j(\zeta)}{c+1} \sigma^0 
			&= \, h_j^0
			& (j=1, \cdots , m), \\
			P_+(\zeta, 0) w^0
			&= \, 0.
			& \,
		\end{array}
	\end{align}
To ensure solvability of this problem, it is enough to impose the following the condition; for all $\{h_j\}_{j=0}^m \in F \times E^m$ and for any $\lambda \in \DomainOfLambda$ and $\xi' \in \mathbb{S}^{n-2}$, 
	\begin{align*}
		\cA( \xi', D_y) v(y) 
		&= 0 \quad (y>0), \\
		\cB_{0\#}( \xi', D_y) v_0 
		+ (\lambda + \delta_{l,l_0}\cC_{0\#}( \xi')) \sigma &= h_0, \\
		\cB_{j\#}(\xi', D_y) v_0 
		+ \delta_{l,l_j}\cC_{j\#}( \xi')\sigma 
		&= h_j \quad (j=1, \cdots , m)
	\end{align*}
admits a unique solution $(v, \sigma) \in C^{2m}_0(\IR_+; E) \times F$.
This condition is nothing but the second asymptotic Lopatinskii--Shapiro condition.
We find from the above discussion that $M_w^0$ and $M_\sigma^0$ is bounded holomorphic on $D_\zeta \times D_a \times \DomainOfLambda$ ($\theta > \pi /2$).
Moreover, 
	\begin{align*}
		\LongSet{
			(M_w^0, M_\sigma^0) (\zeta, \tilde{a}, \nu)
		}{ 
			(\zeta, \tilde{a}, \nu) 
			\in \overline{D_\zeta} \times \overline{D_{\tilde{a}}} \times \DomainOfLambda
		}
	\end{align*}
is $\mathcal{R}$-bounded since $E$ and $F$ are Hilbert spaces.
Set $M^0 = (M_w^0, M_\sigma^0)$ and
	\begin{align*}
		L_1 
		& = - i \nabla^\prime (\eta  + (- \Delta^\prime)^{m})^{- 1/2m}, \\
		L_2 
		& = \eta^{1/2m} (\eta + (- \Delta^\prime)^{m})^{ - 1 / 2m}, \\
		L_3 
		& = \partial_t  (\eta + (- \Delta^\prime)^m))^{ - l / 2 m}.
	\end{align*}
Then, boundedness and analyticity of $M^0$ with respect to $\zeta$ and $a$ leads
	\begin{align*}
		 \mathcal{R} \left \{
			{\xi^{\prime}}^\alpha \partial_{\xi^\prime}^\alpha M^0 \left( 
				\frac{\xi^\prime}{( \eta + \abs{\xi^\prime}^{2m})^{1/2m}}, 
				\frac{1}{ (\eta + \abs{\xi^\prime}^{2m})^{1/2m}}, 
				\nu 
			\right)
		: 
			\xi^\prime \in \Real^{n-1} \setminus \{ 0 \}, \,
			\alpha \in \{0, 1 \}^{n-1}, \,
			\nu \in \DomainOfLambda
		\right \}
		< \infty.
	\end{align*}
Thus, the operator-valued Fourier multiplier theorem implies 
	\begin{align*}
		 M^0 (L_1, L_2, \nu) \in \mathcal{B} (Y_F \times Y^{m}_E ; Y^{2m}_E \times Y_F)
	\end{align*}
and $\LongSet{M^0 (L_1, L_2, \nu) }{\nu \in \DomainOfLambda}$ is $\mathcal{R}$-bounded on $\mathcal{B} (Y_F \times Y^{m}_E ; Y^{2m}_E \times Y_F)$.
Finally, because of analyticity of $M^0 (L_1 , L_2 , \cdot)$, we can use the Kalton--Weis theorem to find $M^0(L_1, L_2, L_3) \in \mathcal{B} (Y_F \times Y^{m}_E ; Y^{2m}_E \times Y_F)$.
\end{proof}
	
We find from Proposition \ref{prop_boundedness_of_canonical_extension}, Proposition \ref{prop_boundedness_multiplier_operator} and
	\begin{align*}
		&\frac{d}{dt} + (\eta +(- \Delta')^{m})^{l / 2m} \\ 
		&\quad \in {\rm Isom}(W^1_p(J; W^{2m-1/p}_{q} (\IR^{n-1}; F)) 
		\cap L_p (J; W^{l + 2m- 1/p}_{q}(\IR^{n-1}; F)); Y_F), \\
		& \left(
			\eta + (- \Delta')^{m} 
		\right)^{m_0 / 2 m} \\ 
		&\quad \in {\rm Isom}(W^1_p(J; W^{2m-1/p}_{q} (\IR^{n-1}; F)) 
		\cap L_p (J; W^{l + 2m- 1/p}_{q}(\IR^{n-1}; F)); Z_\rho), \\
		& \left(
			\eta + (- \Delta')^{m}
		\right)^{m_0  / 2 m} 
		\in {\rm Isom}(Y_F; Y_0) \\
		& \left(
			\eta + (- \Delta')^{m}
		\right)^{m_j / 2 m} 
		\in {\rm Isom}(Y_F; Y_j) \quad (j=1, \cdots, m), 
	\end{align*}
that
	\begin{align*}
		&\|u (0) \|_{Y_E} 
		+ \left\|
			\left(
			\frac{d}{dt} 
			+ (\eta +(- \Delta')^{m})^{l/2m}
			\right)
			\left(
			\eta + (- \Delta')^{m}
		\right)^{-m_0  / 2 m}  \rho
		\right\|_{Y_F} \\
		\le &~C \left(
			\|
			\left(
			\eta + (- \Delta')^{m}
		\right)^{-m_0  / 2 m}  g_0
			\|_{Y_F} + \sum_{j=1}^m \|
					\left(
			\eta + (- \Delta')^{m}
		\right)^{-m_j / 2 m}  g_j
			\|_{Y_E}
		\right), 
	\end{align*}
where $u (0) := u |_{y=0}$
This leads the desired maximal $L_p$ regularity 
	\begin{align*}
		\eta \Vert
			u
		\Vert_{X}
		+ \|u\|_{Z_u} 
		+ \| \rho \|_{Z_\rho} 
		\le ~C \left(
			\|g_0\|_{Y_0} + \sum_{j=1}^m \|g_j\|_{Y_j}
		\right)
	\end{align*}
for some $C>0$.
We finish proving Theorem \ref{thm_main} in the case of the half space.

\subsection{The case of a domain with a compact boundary}\label{Sec: General}

Let us consider the case of a bounded domain $G$ and an exterior.
The proof is based on (i) solving the case of variable coefficient with lower order terms, (ii) localization procedure and coordinate transform.
Since this method is well-known, we do not give a detail of the proof, see \cite{DenkHieberPruss2003,Denk-Pruss-Zacher2008,KunstmannWeis2004,PrussSimonett2016} for example.
We show only outline of the proof.
Note that conditions (E), (LS) and (ALS) are invariant under the coordinate transform.
First we give estimates for lower-order terms.
	\begin{proposition}\label{prop_lower}
Let $a_\alpha, b_{j\beta}, c_{j\gamma}$ satisfy (SA), (SB) and (SC), then there exists $C>0$ such that 
		\begin{align*}
			\|a_\alpha D^\alpha u\|_X 
			&\le C\|a_\alpha\|_{L_\infty(J; L_{r_\alpha}(G))} \|u\|_{Z_u} ,
			\quad (\abs{\alpha} \leq 2m -1) \\
			\|b_{j\beta} D^\beta u\|_{Y_j} 
			&\le C\|b_{j\beta}\|_{L_\infty(J; (L_{s_{j\beta}}\cap B^{2m\kappa_j}_{r_{j\beta}, q})(\g))} \|u\|_{Z_u} ,
			\quad (\abs{\beta} \leq m_j -1) \\
			\| c_{j\gamma} D^\gamma_\g \rho\|_{Y_j} 
			&\leq C \|
				c_{j\gamma}
			\|_{
				L_{t_{j\gamma}}(J; L_{s_{j\gamma}}(\g)) 
				\cap L_{\tau_{j\gamma}}(J; B^{2m\kappa_j}_{r_{j\gamma}, q}(\g))
			} \|
				\rho
			\|_{Z_\rho},
			\quad (\abs{\gamma} \leq k_j -1)
		\end{align*}
	\end{proposition}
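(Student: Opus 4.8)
The plan is to reduce each of the three estimates to a product-estimate between function spaces of Sobolev–Besov type — in the spatial variable alone for the first two lines and jointly in $(t,x)$ for the third — via Hölder's inequality, followed by a Sobolev embedding that absorbs the derivatives $D^\alpha u$, $D^\beta u$, $D^\gamma_\g\rho$ into the ambient regularity of $u\in Z_u$ or $\rho\in Z_\rho$. The key point is that the exponent conditions in (SA), (SB), (SC) are exactly the scaling conditions under which the relevant Hölder/Sobolev estimate closes, so the whole argument is a bookkeeping exercise in which the ``hard part'' is choosing the right intermediate exponents.

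For the first line, fix $t$ and note $D^\alpha u(t,\cdot)\in W^{2m-|\alpha|}_q(G;E)$ with norm controlled by $\|u(t)\|_{W^{2m}_q}$; since $\frac{n}{r_\alpha}\le 2m-|\alpha|$, the Sobolev embedding $W^{2m-|\alpha|}_q(G)\hookrightarrow L_{q_\alpha}(G)$ holds for the $q_\alpha$ with $\frac1{q_\alpha}=\frac1q-\frac{1}{r_\alpha}$ (and all of $L_q$ when the inequality is strict, using interpolation if needed), so Hölder with exponents $(r_\alpha,q_\alpha)$ gives $\|a_\alpha D^\alpha u(t)\|_{L_q(G)}\le C\|a_\alpha(t)\|_{L_{r_\alpha}}\|u(t)\|_{W^{2m}_q}$; taking $L_p$ in $t$ and using $a_\alpha\in L_\infty(J;L_{r_\alpha})$ finishes it. For the second line the same scheme runs on the boundary manifold $\g$ of dimension $n-1$: one uses $D^\beta u|_\g\in W^{2m\kappa_j+(m_j-|\beta|)}_q(\g)$ by the trace theorem (this is where the Besov condition $\frac{n-1}{r_{j\beta}}\le 2m-|\beta|-1/q$ enters, matching the trace loss $1/q$), writes $Y_j$ as a $W^{2m\kappa_j}_q$-space, and interpolates/embeds so that multiplication by $b_{j\beta}\in L_{s_{j\beta}}\cap B^{2m\kappa_j}_{r_{j\beta},q}$ is bounded $W^{2m\kappa_j+(m_j-|\beta|)}_q\to W^{2m\kappa_j}_q$; the two conditions on $s_{j\beta}$ and $r_{j\beta}$ are precisely the scaling requirements for the $L$-part and the Besov-part of this pointwise-multiplier estimate, respectively, and one combines them by real interpolation.

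The third line is the genuinely two-parameter estimate and is where I expect the main obstacle. Here $\rho\in Z_\rho=W^1_p(J;W^{2m\kappa_0}_q(\g;F))\cap L_p(J;W^{l+2m\kappa_0}_q(\g;F))$, so $D^\gamma_\g\rho$ lives in a mixed-norm space obtained by real interpolation in the $x$-direction between these two endpoints, and the target $Y_j=L_p(J;W^{2m\kappa_j}_q(\g))$ is again a mixed-norm space; the coefficient $c_{j\gamma}$ is only in $L_{t_{j\gamma}}(J;L_{s_{j\gamma}^c}(\g))\cap L_{\tau_{j\gamma}}(J;B^{2m\kappa_j}_{r_{j\gamma}^c,q}(\g))$, i.e.\ it has limited integrability in time as well. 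The plan is to apply Hölder in $t$ with exponents matched to $t_{j\gamma}$ (respectively $\tau_{j\gamma}$) and simultaneously Hölder/Sobolev in $x$ on $\g$, using the embedding $Z_\rho\hookrightarrow L_{p_1}(J;W^{\theta_1}_q(\g))$ that follows by interpolating the two defining endpoints of $Z_\rho$ — the mixed condition $\frac{l}{t_{j\gamma}}+\frac{n-1}{s_{j\gamma}^c}\le l-|\gamma|+m_j-m_0$ is exactly the statement that the time-integrability deficit $l/t_{j\gamma}$ plus the spatial Sobolev deficit $(n-1)/s_{j\gamma}^c$ together do not exceed the total number of derivatives $l-|\gamma|+m_j-m_0$ available in $Z_\rho$ beyond the target, and similarly for the Besov component with $\tau_{j\gamma},r_{j\gamma}^c$ and $2m\kappa_0$. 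Carrying this out cleanly requires a mixed-norm interpolation inequality for the intersection space $Z_\rho$ (trading time-smoothness for space-smoothness) together with a pointwise-multiplier theorem on $\g$ in Besov scales; both are standard (see \cite{PrussSimonett2016}), but verifying that the two conditions in (SC) suffice to cover all $|\gamma|\le k_j-1$ simultaneously, and then summing over $j$ and over $\gamma$, is the delicate bookkeeping step. The top-order coefficients, being $BUC$, contribute bounded multiplication operators trivially and are handled separately (they are not part of the asserted lower-order estimate but are used in the subsequent perturbation step).
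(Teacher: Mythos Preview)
Your proposal is correct and follows essentially the same route as the paper: H\"older plus Sobolev embedding for the $a_\alpha$-term, a paraproduct/pointwise-multiplier estimate in the Besov scale on $\Gamma$ combined with the trace embedding for the $b_{j\beta}$-term, and for the $c_{j\gamma}$-term the same multiplier estimate in space followed by H\"older in time, closed via the mixed derivative theorem $Z_\rho=\bigcap_{0\le s\le 1}W^s_p(J;W^{l(1-s)+2m\kappa_0}_q(\Gamma))$, which is exactly your ``interpolating the two defining endpoints of $Z_\rho$.'' The paper's proof is just a cleaner write-up of what you have outlined.
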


	\begin{proof}
		First, for each $|\alpha|=k\le 2m-1$, the assumption (SA) derives 
		\begin{align*}
			\|
				a_\alpha D^\alpha u
			\|_{L_q(G)} 
			&\le \|
				a_\alpha
			\|_{ L_{r_\alpha}(G)} \|
				D^\alpha u
			\|_{L_{r'_\alpha}(G)}\\
			& \le \|
				a_\alpha
			\|_{L_{r_\alpha}(G)} \|
				u
			\|_{W^{2m}_q(G)}, 
		\end{align*}
where $1/q=1/r_\alpha+ 1/r'_\alpha$ and we use the embedding $ W^{2m}_q(G) \hookrightarrow W^k_{r'_\alpha}(G)$. 
This means 
		\begin{align*}
			\|
				a_\alpha D^\alpha u
			\|_{X} 
			&\le \|
				a_\alpha
			\|_{L_\infty(J; L_{r_\alpha}(G))}\|
				u
			\|_{Z_u}.
		\end{align*}
Second, for each $|\beta|=k\le m_j -1$, we find from paraproduct formula, definition of Besov spaces on a domain and the assumption (SB) that
		\begin{align}
			\|
				b_{j\beta} D^\beta u
			\|_{W^{2m\kappa_j}_{q}(\g)} \notag 
			&\le C(\|
				b_{j\beta}
			\|_{L_{s_{j\beta}}(\g)} \|
				D^\beta u 
			\|_{B^{2m\kappa_j}_{s'_{j\beta}, q}(\g)} 
			+ \|
				b_{j\beta}
			\|_{B^{2m\kappa_j}_{r_{j\beta}, q}(\g)} \|
				D^\beta u
			\|_{L_{r'_{j\beta}}(\g)}) \\ \label{eq_b_j_beta_D_beta_u} \notag \\
			 &\le C \|
				b_{j\beta}
			\|_{(L_{s_{j\beta}}\cap B^{2m\kappa_j}_{r_{j\beta}, q})(\g)} \|
				u 
			\|_{W^{2m-1/p}_{q}(\g)}, 
		\end{align}
where $1/q = 1/s_{j\beta} + 1/s'_{j\beta} = 1/r_{j\beta} + 1/r'_{j\beta}$ and we used the embeddings
		\begin{align*}
			{\rm tr}|_\g u 
			&\in W^{2m-1/q}_{q}(\g)
			\hookrightarrow (B^{k + 2m\kappa_j}_{s'_{j\beta}, q} 
			\cap W^k_{r'_{j\beta}})(\g). 
		\end{align*}
Moreover, we have $\mathrm{tr}|_{\gamma} u \in L_p (J ; W^{2m - 1/q}_q (\Gamma))$ for $u \in Z_u$.
This means 
		\begin{align*}
			\|b_{j\beta} D^\beta u\|_{Y_j} &\le C\|b_{j\beta}\|_{L_\infty(J; (L_{s_{j\beta}} \cap W^k_{r'_{j\beta}})(\g))} \|u\|_{Z_u}. 
		\end{align*}
At last, for each $|\gamma|=k \le k_j - 1$, it follows from the same way as for (\ref{eq_b_j_beta_D_beta_u}) that
		\begin{align*}
			 \|
				c_{j\gamma} D^\gamma_\g \rho
			\|_{W^{2m\kappa_j}_{q}(\g)} 
			 \le C \left(
				\|
					c_{j\gamma}
				\|_{L_{s_{j\gamma}^c}(\g)}\|
					D^\gamma_\g \rho 
				\|_{B^{2m\kappa_j}_{s^{c'}_{j\gamma}, q}(\g)} 
				+ \|
					c_{j\gamma}
				\|_{B^{2m\kappa_j}_{r_{j\gamma}^c, q}(\g)} \|
					D^\gamma_\g \rho
				\|_{L_{r_{j\gamma}^{c'}}(\g)}
			\right)
		\end{align*}
where $1/q = 1/{s_{j\gamma}^c} + 1/{s^{c'}_{j\gamma}} = 1/{r_{j\gamma}^c} + 1/{r^{c'}_{j\gamma}}$. 
Integral in time and use H\"older's inequality, 
		\begin{align*}
			\|
				c_{j\gamma} D^\gamma_\g \rho
			\|_{Y_j} 
			& \le C\left(
				\|
					c_{j\gamma}
				\|_{L_{t_{j\gamma}}(J; L_{s_{j\gamma}^c}(\g))}
				\|
					D^\gamma_\g \rho 
				\|_{L_{t'_{j\gamma}}(J; B^{2m\kappa_j}_{s^{c'}_{j\gamma}, q}(\g))} 
			+ \|
					c_{j\gamma}
				\|_{L_{\tau_{j\gamma}}(J; B^{2m\kappa_j}_{r_{j\gamma}^c, q}(\g))} \|
					D^\gamma_\g \rho
				\|_{L_{\tau'_{j\gamma}}(J; L_{r_{j\gamma}^{c'}}(\g)})\right)
		\end{align*}
where $1/p = 1/t_{j\gamma} + 1/t_{j\gamma} = 1/\tau_{j\gamma} + 1/\tau'_{j\gamma}$. 
Here we use the mixed derivative theorems 
		\begin{align*}
			Z_\rho 
			= W^1_p(J; W^{2m\kappa_0}_{q}(\g)) \cap L_p(J; W^{l + 2m\kappa_0}_{q} (\g)) 
			= \bigcap_{0\le s \le 1} W^s_p(J; W^{l(1-s) + 2m\kappa_0}_{q}(\g)). 
		\end{align*}
The assumption (SC) ensures the existence of $s \in [0, 1]$ such that 
		\begin{align*}
			W^s_p(J; W^{l(1-s) + 2m\kappa_0}_{q}(\g))
			\hookrightarrow L_{t'_{j\gamma}}(J; B^{k + 2m\kappa_j}_{s^{c'}_{j\gamma}, q}(\g)) ,L_{\tau'_{j\gamma}}(J; W^k_{r_{j\gamma}^{c'}}(\g)), 
		\end{align*}
respectively. 
This means 
		\begin{align*}
			\|
				c_{j\gamma} D^\gamma_\g \rho
			\|_{Y_j} 
			&\le C\|
				c_{j\gamma}
			\|_{L_{t_{j\gamma}}(J; L_{s_{j\gamma}^c}(\g)) \cap L_{\tau_{j\gamma}}(J; B^{2m\kappa_j}_{r_{j\gamma}^c, q}(\g))} \|\rho\|_{Z_\rho}. 
		\end{align*}
	\end{proof}
	
	\begin{proposition} \label{prop_qs_variable_coefficient}
Let $J=[0, T]$, $G = \IR^n_+$ and $\g=\partial G$, $1<p, q<\infty$ and $E$ and $F$ be separable Hilbert spaces. 
Let assumptions $(E)$,  $(SA)$,  $(SB)$,  $(SC)$,  $(LS)$ and $(ALS)$ hold.
Assume $\mathcal{A}$, $\mathcal{B}_j$ and $\mathcal{C}_j$ are given by
		\begin{align*}
			\mathcal{A} ( t, x, D)
			& = \mathcal{A}_\#(D) 
			+ \mathcal{A}^{small} (t,  x, D)
			+ \mathcal{A}^{low}( t, x, D) , \\
			\mathcal{B}_j (t, x, D)
			& = \mathcal{B}_{j \#  } (D)
			+ \mathcal{B}^{small}_j (t, x, D)
			+ \mathcal{B}^{low}_j (t, x, D), \\
			\mathcal{C}_j (t, x^\prime, D_\Gamma)
			& = \mathcal{C}_{j \#} (D_\Gamma) 
			+ \mathcal{C}^{small}_j (t, x^\prime, D_\Gamma)
			+ \mathcal{C}^{low}_j (t, x^\prime, D_\Gamma),
		\end{align*}
where the equation with $\mathcal{A}_\#$, $\mathcal{B}_{j \# }$ and $\mathcal{C}_{j \# }$ satisfy (LS) and (ALS), $\mathcal{A}^{low}$, $\mathcal{B}_{j }^{low}$ and $\mathcal{C}_{j }^{low}$ $(j = 0, \cdots , m)$ are lower order terms and 
		\begin{align*}
			\Vert
				\mathcal{A}^{small} ( x, D) u
			\Vert_{X} 
			& \leq \delta \Vert
				u
			\Vert_{Z_u} ,\\ 
			\Vert
				\mathcal{B}^{small}_j (t, x, D) u
			\Vert_{Y_j} 
			& \leq \delta \Vert
				u
			\Vert_{Z_u}
			\quad (j = 0, \cdots , m) ,\\
			\Vert
				\mathcal{C}^{small}_j (t, x, D) \rho
			\Vert_{Y_j} 
			& \leq \delta \Vert
				\rho
			\Vert_{Z_\rho}
			\quad (j = 0, \cdots , m),
		\end{align*}
for sufficiently small $\delta >0$.
Then, there exist positive constants $\eta_0 > 0$, $C$ and $C_T$, for 
		\begin{align*}
			(f, \{g_j\}_{j=0}^m, \rho_0) 
			\in X \times \prod_{j=0}^m Y_j \times \pi Z_\rho,
		\end{align*}
if $\eta \geq \eta_0$, the equations (\ref{eq_qs}) admits a unique solution $(u, \rho) \in Z_u \times Z_\rho$ such that
		\begin{align}
			\eta \Vert
				u
			\Vert_{X}
			+ \Vert
				u
			\Vert_{Z_u}
			+ \Vert
				\rho
			\Vert_{Z_\rho}
			\leq C \Vert
				f
			\Vert_{X}
			+ C \sum_{j= 0}^m \Vert
				g_j
			\Vert_{Y_j}
			+  C_T \Vert
				\rho_0
			\Vert_{\pi Z_\rho}. \label{eq_max_reg_half_space_variable_coef}
		\end{align}
	\end{proposition}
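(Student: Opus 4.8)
The plan is to treat Proposition \ref{prop_qs_variable_coefficient} as a perturbation of the constant-coefficient principal part problem that has already been solved on the half space in Subsection \ref{subsec: FT}. Write the equations (\ref{eq_qs}) as $\cL_\# (u,\rho) = (f,g,\rho_0) - (\cL^{small} + \cL^{low})(u,\rho)$, where $\cL_\#$ denotes the full linear map associated with $\cA_\#, \cB_{j\#}, \cC_{j\#}$ and the time/trace structure. From Subsections \ref{subsec: Reduction} and \ref{subsec: FT} we already know $\cL_\#$ is an isomorphism from $Z_u \times Z_\rho$ onto $X \times \prod_{j=0}^m Y_j \times \pi Z_\rho$, with an operator norm bound on the inverse that is uniform for $\eta \geq \eta_0$ (in fact the estimate there gives an extra factor $\eta$ in front of $\|u\|_X$, which will be used below). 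The strategy is a Neumann series / contraction argument: $(u,\rho)$ solves the variable-coefficient problem iff it is a fixed point of the map $\Phi(u,\rho) := \cL_\#^{-1}\bigl((f,g,\rho_0) - (\cL^{small}+\cL^{low})(u,\rho)\bigr)$, and we must show $\Phi$ is a contraction on $Z_u \times Z_\rho$.

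The key steps, in order. First, control the ``small'' part: by hypothesis $\|\cA^{small}u\|_X, \|\cB_j^{small}u\|_{Y_j} \le \delta\|u\|_{Z_u}$ and $\|\cC_j^{small}\rho\|_{Y_j}\le\delta\|\rho\|_{Z_\rho}$, so after composing with $\cL_\#^{-1}$ the contribution of the small part to $\Phi$ has Lipschitz constant $\le C_\#\delta$ where $C_\#$ is the norm of $\cL_\#^{-1}$; choosing $\delta$ small (relative to $C_\#$, which is fixed once $\eta_0$ is fixed) makes this $\le \tfrac14$. Second, absorb the lower-order part using Proposition \ref{prop_lower}: the estimates there bound $\|a_\alpha D^\alpha u\|_X$, $\|b_{j\beta}D^\beta u\|_{Y_j}$, $\|c_{j\gamma}D^\gamma_\g\rho\|_{Y_j}$ by the relevant coefficient norms times $\|u\|_{Z_u}$ or $\|\rho\|_{Z_\rho}$. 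These constants are \emph{not} small, but every lower-order term carries at least one negative power of the ``elliptic weight'': concretely, running the argument with $\cA$ replaced by $\eta + \cA_\#$ and exploiting the factor $\eta\|u\|_X$ on the left-hand side of the half-space estimate, together with interpolation inequalities of the form $\|u\|_{W^k_{r'}} \le \epsilon\|u\|_{W^{2m}_q} + C_\epsilon \eta^{\text{(some positive power)}}\|u\|_{L_q}$ (valid precisely because of the dimensional conditions $\tfrac{n}{r_\alpha}\le 2m-k$ etc. imposed in (SA), (SB), (SC)), shows that $\|\cL_\#^{-1}(\cL^{low}(u,\rho))\|_{Z_u\times Z_\rho} \le C\eta^{-\epsilon_0}\|(u,\rho)\|_{Z_u\times Z_\rho}$ for some $\epsilon_0 > 0$. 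Choosing $\eta_0$ large makes this $\le\tfrac14$. Third, conclude: $\Phi$ is then a contraction with constant $\le\tfrac12$, so it has a unique fixed point, which depends linearly and continuously on the data; tracking the constants gives (\ref{eq_max_reg_half_space_variable_coef}), with $C_T$ absorbing the $\rho_0$-dependence coming from the reduction step in Subsection \ref{subsec: Reduction} (the inhomogeneous initial data is handled there and produces a constant that may depend on $T = |J|$).

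I expect the main obstacle to be the lower-order absorption in the second step, i.e. verifying that the dimensional conditions in (SA)--(SC) really do buy a uniform negative power of $\eta$ after applying $\cL_\#^{-1}$. The subtlety is that $Z_\rho$ is an intersection space $W^1_p(J;W^{2m\kappa_0}_q(\g;F))\cap L_p(J;W^{l+2m\kappa_0}_q(\g;F))$, and the $\cC_j^{low}$ terms mix time and space regularity; one must invoke the mixed-derivative characterization $Z_\rho = \bigcap_{0\le s\le1}W^s_p(J;W^{l(1-s)+2m\kappa_0}_q(\g;F))$ used in the proof of Proposition \ref{prop_lower}, and check that the scaling of the embedding constants in $\eta$ is favorable. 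Everything else — the contraction mapping itself, linearity of the solution map, uniqueness — is routine once the two smallness bounds $\le\tfrac14$ are in place, and the passage from this half-space variable-coefficient statement to the full Theorem \ref{thm_main} on a domain with compact boundary is the standard localization/coordinate-transform machinery (covering $\g$ by finitely many charts, a partition of unity, freezing coefficients, and invariance of (E), (LS), (ALS) under the transforms), which the paper explicitly chooses not to spell out.
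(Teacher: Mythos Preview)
Your overall framework---writing the problem as $\cL_\#(u,\rho)=(\text{data})-(\cL^{small}+\cL^{low})(u,\rho)$, inverting $\cL_\#$ via the half-space result, and closing with a Neumann series---is exactly what the paper does. The treatment of $\cA^{small},\cB_j^{small},\cC_j^{small}$ (smallness by hypothesis) and of $\cA^{low},\cB_j^{low}$ (interpolation $+$ the factor $\eta\|u\|_X$ on the left) is also the same in spirit.

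The divergence, and the gap, is in how you absorb $\cC_j^{low}$. You assert that ``the scaling of the embedding constants in $\eta$ is favorable'' so that $\|\cL_\#^{-1}\cL^{low}(u,\rho)\|\le C\eta^{-\epsilon_0}\|(u,\rho)\|$. But the half-space estimate you are invoking reads
\[
\eta\|u\|_X+\|u\|_{Z_u}+\|\rho\|_{Z_\rho}\le C\Big(\|f\|_X+\sum_{j=0}^m\|g_j\|_{Y_j}\Big),
\]
and there is \emph{no} $\eta$-weighted lower-order norm of $\rho$ on the left. The parameter $\eta$ sits only in the interior elliptic equation; it enters the boundary evolution for $\rho$ only indirectly through the Dirichlet-to-Neumann structure, and the paper's stated estimate does not extract anything like $\eta^\theta\|\rho\|_{L_p(J;W^{2m\kappa_0}_q)}$. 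So after interpolating $\|c_{j\gamma}D^\gamma_\Gamma\rho\|_{Y_j}\le\epsilon\|\rho\|_{Z_\rho}+C_\epsilon\|\rho\|_{\text{low}}$, you have nothing on the left to absorb the $C_\epsilon\|\rho\|_{\text{low}}$ term by taking $\eta$ large. Your ``second step'' therefore does not close as written.

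The paper handles exactly this point by a different mechanism: it first assumes $|J|$ is small, uses the space-time Sobolev embedding (with $\rho(0)=0$ after the reduction) to make the lower-order $\rho$-terms small---schematically, $\|\rho\|_{\text{low}}\lesssim |J|^\alpha\|\rho\|_{Z_\rho}$---and only then runs the Neumann series. For a general finite interval $J=[0,T]$ it subdivides $J$ into finitely many short pieces and iterates the short-time result; this subdivision is also the origin of the $T$-dependent constant $C_T$ in (\ref{eq_max_reg_half_space_variable_coef}), not merely the $\rho_0$-reduction as you suggest. If you want to avoid the short-interval trick, you would need to go back into the half-space analysis and prove an $\eta$-weighted estimate for $\rho$ in a lower norm; that is plausible from the symbol structure $\mu^l=(\eta+|\xi'|^{2m})^{l/2m}\ge\eta^{l/2m}$, but it is additional work that neither the paper nor your proposal supplies.
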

	\begin{proof}
Assume $|J|$ is small, where $|J|$ is the length of $J$.
Clearly, $\Vert u \Vert_{D (\mathcal{A}^{low})} \leq \Vert u \Vert_{D (\mathcal{A}_\#)}$, $\Vert u \Vert_{D (\mathcal{B}_j^{low})} \leq \Vert u \Vert_{D (\mathcal{B}_{j \#})}$ and $\Vert u \Vert_{D (\mathcal{C}_j^{low})} \leq \Vert u \Vert_{D (\mathcal{C}_{j\#})}$.
Thus, if we take $\eta > 0$ sufficiently large, we find from the space-time Sobolev embedding, which enable us to estimate lower-order terms as small perturbation since $|J|$ is small, and the Neumann series argument that can be estimated $\mathcal{A}^{small}$, $\mathcal{A}^{low}$, $\mathcal{B}^{small}_j$, $\mathcal{B}^{low}_j$, $\mathcal{C}^{small}_j$ and $\mathcal{C}^{low}_j$ as relatively small perturbations.
For $J$ with arbitrary finite length, we can divide $J$ into finite short intervals.
For these short intervals, we can apply the same argument as above step by step to get (\ref{eq_max_reg_half_space_variable_coef}).
	\end{proof}

Now we prove Theorem \ref{thm_main}.
For the sake of simplicity, we consider the case of bounded domains.
The case of exterior domains is treated by a similar way.
Temporarily, we assume $|J|$ is small.
Let $\delta > 0$ be small.
Let us introduce an open covering of $G$ such that
	\begin{align*}
		G 
		& \subset \cup_{k = 0}^{N} U_k \\
		U_k
		& = B (x_k, \delta), \quad x_k \in G 
		\quad (k = 0, \cdots , M) \\
		U_k
		& = B (x_k, \delta), \quad x_k \in \partial G
		\quad (k = M + 1 , \cdots , N)
	\end{align*}
for some $M$ and $N$.
We also introduce a partition of unity $\{ \varphi_j \}_{j = 0}^N$ satisfying 
	\begin{align*}
		\varphi_j \in C_0^\infty (\Real^n), \quad
		0 \leq \varphi_j \leq 1, \quad
		\mathrm{spt} \, \varphi_j \subset U_j, \quad
		\sum_{j = 0}^N \varphi_j \equiv 1 \, \, \mathrm{on} \, \, \overline{G}
	\end{align*}

Suppose $(u, \rho) \in Z_u \times Z_\rho$ be a solution to (\ref{eq_qs}) with $\rho_0=0$, which is without loss of generality.
For $k \geq M + 1$, we apply the canonical coordinate transform, which is denoted by $\Phi_k$, from $U_k$ to local neighbourhood of the half space so that $U_k \cap \Gamma$ is flat.
Since coefficients are continuous, if we take $\delta > 0$ be sufficiently small beforehand, we can extend coefficients to the half space and write these extended coefficients as $a_\alpha^k$, $b_{j \beta}^k$ and $c_{j \gamma}^k$, to the half space so that 
	\begin{align*}
		\Vert a^k_\alpha - a_\alpha (0, x_k) \Vert_{L_\infty (J \times \Real^n_+ ; \cB{(E)})}, \quad
		\Vert b^k_{j \beta} - b_{j \beta} (0, x_k) \Vert_{L_\infty (J \times \Real^{n-1} ; \cE_j)}, \quad \Vert c_{j \gamma}^k - c_{j \gamma} (0, x_k)\Vert_{L_\infty (J \times \Real^{n-1} ; \cF_j)}
	\end{align*}
are sufficiently small.
Put $(u_k, \rho_k, f_k, g_{j,k}) =  ( \phi_k u, \phi_k \rho, \phi_k f, \phi_k g_j)$ for $k = 0, \cdots , N$.
	Then, for $k = M+1, \cdots , N$, $(u_k, \rho_k)$ satisfies
	\begin{align} \label{eq_cut_off_qs}
		\left\{
			\begin{array}{rclll}
				\eta u_k
				+ \cA_{\#} u_k
				&=
				& F_k (f_k, u) \quad 
				&(t \in J, \, x \in G),
				& \, \\
				\partial_t \rho_k  
				+\cB_{0 \#} u_k + \cC_{0 \#} \rho_k
				&=
				& G_{0, k} (g_{0, k}, u, \rho) \quad 
				&(t \in J, \, x \in \Gamma), 
				& \, \\
				\cB_{j \#} u_k + \cC_{j \#} \rho_k 
				&=
				& G_{j, k} (g_{j, k}, u, \rho) \quad 
				& (t \in J, \, x \in \Gamma)
				& (j = 1, \cdots , m), \\
				\rho_k(0)
				&=
				& 0 \quad 
				&(x \in \Gamma), 
				& \,
			\end{array}
		\right.
	\end{align}
for
	\begin{align*}
		F_k (f_k, u)
		& = f_k 
		- \varphi_k \left(
			\mathcal{A} - \mathcal{A}_\# 
		\right) u
		+ \left [
			\mathcal{A}, \varphi_k
		\right ] u \\
		G_{0, k} (g_{0, k}, u, \rho)
		& = g_{0,k}
		- \varphi_k \left(
			\mathcal{B}_{0 \#} - \mathcal{B}_0
		\right) u
		+ \left[
			\mathcal{B}_0, \varphi_k
		\right] u  - \varphi_k \left(
			\mathcal{C}_{0 \#} - \mathcal{C}_0
		\right) \rho
		+ \left[
			\mathcal{C}_0, \varphi_k
		\right] \rho	\\
		G_{j, k} (g_{j, k}, u, \rho)
		& = g_{j,k}
		- \varphi_k \left(
			\mathcal{B}_{j \#} - \mathcal{B}_j
		\right) u
		+ \left[
			\mathcal{B}_j, \varphi_k
		\right] u  - \varphi_k \left(
			\mathcal{C}_{j \#} - \mathcal{C}_j
		\right) \rho
		+ \left[
			\mathcal{C}_j, \varphi_k
		\right] \rho
		\quad (j = 1, \cdots , m),
	\end{align*}
where $[ \cdot , \cdot ]$ is the commuter.
For $k = 0, \cdots , M$, we can solve
	\begin{align*}
		\eta u_k
		+ \cA_{\#} u_k
		= F_k (f_k, u) \quad 
		(t \in J, \, x \in \Real^n)
	\end{align*}
and find from Proposition {\ref{prop_lower}} and the regularity estimate of elliptic operators that
	\begin{align} \label{eq_estimate_u_k_cut_off_qs_1}
		\eta \Vert
			u_k
		\Vert_{X}
		+ \Vert
			u_k
		\Vert_{Z_u}
		\leq C_1 \Vert
			f
		\Vert_{X}
		+ \varepsilon \Vert
			u
		\Vert_{Z_u}
		+ C_\varepsilon \Vert
			u
		\Vert_{X}.
	\end{align}
for sufficiently small $\varepsilon > 0$.
Put $\rho_k = 0$ for $k = 0, \cdots , M$.
In the case of $k = M + 1, \cdots , N$,	we apply coordinate transform by $\Phi_k$ to (\ref{eq_cut_off_qs}).
The transformed problem is solvable by Proposition \ref{prop_qs_variable_coefficient}.
Pulling buck the solution to we obtain the solution to (\ref{eq_cut_off_qs}) such that
	\begin{align} \label{eq_estimate_u_k_cut_off_qs_2}
		\eta \Vert
			u_k
		\Vert_{X}
		+ \Vert
			u_k
		\Vert_{Z_u}
		+ \Vert
			\rho_k
		\Vert_{Z_\rho} 
		\leq  C \Vert
			f
		\Vert_{X}
		+ C \sum_{j = 0}^m \Vert
			g_j
		\Vert_{Y_j}
		+ \varepsilon \Vert
			u
		\Vert_{Z_u}
		+ C_\varepsilon \Vert
			u
		\Vert_{X}
		+ C \abs{J}^\alpha \Vert
			\rho
		\Vert_{Z_\rho},
	\end{align}
for some $\alpha > 0$.
	$\abs{J}^\alpha$ appears because $F_k (f_k, u) - f_k$ has only lower-order differential terms.
We denote by $\mathcal{S}^k : X \times Y_0 \times \cdots \times Y_m \rightarrow Z_u \times Z_\rho $ the solution operator of (\ref{eq_cut_off_qs}) with $\rho_0 = 0$, i.e. $(u_k, \rho_k) = \mathcal{S}^k (F_k , G_{0,k}, \cdots , G_{m, k})$.
On the other hand, it follow that
	\begin{align}
		(u, \rho) 
		& = \sum_{k = 0}^N (u_k, \rho_k)
		=  \sum_{k = 0}^N \mathcal{S}^k \left(
			F_k (f_k, u) , G_{0,k} (g_{0, k}, u, \rho), \cdots , G_{m, k} (g_{m, k}, u, \rho)
		\right) \notag \\
		& =  \sum_{k = 0}^N \mathcal{S}^k \left(
			F_k (f_k, 0) , G_{0,k} (g_{0, k}, 0, 0), \cdots , G_{m, k} (g_{m, k}, 0, 0)
		\right) \notag \\
		& \qquad +  \sum_{k = 0}^N \mathcal{S}^k \left(
			F_k (0 , u), G_{0,k} (0, u, \rho), \cdots , G_{m, k} (0, u, \rho)
		\right) \notag \\ \label{eq_sum_of_u_k_rho_k}
		& = : \mathcal{T} (f, g_0, \cdots, g_m)
		- R (u, \rho) ,
	\end{align}
where we write restriction of $(u_k, \rho_k)$ on $U_k$ as $(u_k, \rho_k)$ for simplicity.
	(\ref{eq_estimate_u_k_cut_off_qs_1}) and (\ref{eq_estimate_u_k_cut_off_qs_2}) imply
	\begin{align*}
		\Vert
			R (u, \rho)
		\Vert_{Z_u \times Z_\rho}
		\leq \frac{1}{2}
			\Vert
				u
			\Vert_{Z_u}
			+ C  \abs{J}^\gamma \Vert
				\rho
			\Vert_{Z_\rho}.
	\end{align*}
for some $\gamma > 0$.
Let $\mathcal{S}_0 : X \times Y_0 \times \cdots \times Y_m \rightarrow Z_u \times Z_\rho $ be the solution operator of (\ref{eq_qs}) with $\rho_0 = 0$, i.e. $(u, \rho) = \mathcal{S}_0 (f, g_0 , \cdots , g_m)$.
It follows from (\ref{eq_sum_of_u_k_rho_k})
	\begin{align}
		\mathcal{S}_0(f, g_0, \cdots, g_m)
		= \mathcal{T} (f, g_0, \cdots, g_m)
		- R \left(
			\mathcal{S}_0 (f, g_0, \cdots, g_m) 
		\right).
	\end{align}
Then, if we take $\abs{J}$ small and $\eta > 0$ large, we can use the Neumann series argument to get $\mathcal{S}_0 = (Id + R)^{-1} \mathcal{T}$ and
	\begin{align*}
		\Vert
			\mathcal{S}_0
		\Vert_{\mathcal{B}(X \times Y_0 \times \cdots \times Y_m  ; Z_u \times Z_\rho )}
		\leq C.
	\end{align*}
For $J$ with arbitrary finite length, we can divide $J$ into finite short interval.
For this short intervals, the same argument as above also works.

\section{Examples}\label{Examples}

In this section we give some examples for our problems. 
We especially focus on checking the Lopatinskii--Shapiro and asymptotic Lopatinskii--Shapiro conditions. 
Throughout this section, we assume $E=F=\IC$ and write the outer unit normal on the boundary by $\nu$.

{\bf Example \ref{Examples}.1} 
		\begin{align} \label{eq_example_1}
			\left\{
			\begin{array}{rll}
				\eta u - \Delta u 
				&= f \quad 
				&(t\in J,~ x\in G), \\
				\partial_\nu u + \partial_t \rho 
				&= g_0 \quad 
				&(t \in J,~ x\in \g), \\
				u - \rho
				& = 0
				&(t \in J,~ x\in \g) \\
				\rho(0,x) &= \rho_0(x) \quad &(x\in \g). 
			\end{array}
		\right.
		\end{align}
The equation of the Lopatinskii--Shapiro condition is 
		\begin{align*}
			\left\{
			\begin{array}{rll}
				(\eta +  |\xi'|^2 - \partial_y^2) v(y) &= 0 \quad &(y>0), \\
				-\partial_y v(0) + \lambda \sigma &=  h_0, \quad & \\
				v(0) - \sigma &= h_1. \quad &
			\end{array}
		\right.
		\end{align*}
The solution of the first equation $C_0(\IR_+; E)$ is given by $v(y) = e^{-\sqrt{\eta + |\xi'|^2}} v(0) = e^{- \mu y} v(0)$ for $\mu = (\eta + \abs{\xi^\prime}^2)^{1/2}$. 
The boundary conditions lead to the equation 
	\begin{align*}
		\begin{pmatrix}
		 \sqrt{\eta+|\xi'|^2} & \lambda \\
		 1 & -1
		\end{pmatrix}
		\begin{pmatrix}
		 v(0)\\
		 \sigma
		\end{pmatrix}
		= 
		\begin{pmatrix}
		 h_0\\
		 h_1
		\end{pmatrix}. 
		\end{align*}
We see that  the determinant of the matrix is $-\lambda - \sqrt{\eta+|\xi'|^2}\neq 0$ for $\eta>0$, $(\lambda, \xi') \in (\DomainOfLambda \times \IR^{n-1}) \setminus\{(0,0)\}$ ($\theta >\pi / 2$). 
Therefore, the Lopatinskii--Shapiro condition is satisfied. 

The equation of the first asymptotic Lopatinskii--Shapiro condition is
	\begin{align*}
		\left \{
			\begin{array}{rcll}
				( \eta + \abs{\xi^\prime}^2 - \partial_y^2 ) v (y)
				& = 
				& 0
				& (y > 0), \\
				v (0)
				&	= 
				& h_1.
				& \,
			\end{array}
		\right.
	\end{align*}
for $\eta > 0$ and $\xi^\prime \in \Real^{n-1}$.
The solution to this ODE is uniquely determined by $v (y) = e^{ - \mu y} h_1$.
	
The equation of the second asymptotic Lopatinskii--Shapiro condition is 
	\begin{align*}
		\left \{
			\begin{array}{rcll}
				(\abs{\xi^\prime}^2 - \partial_y^2 ) v (y)
				& = 
				& 0
				& (y > 0), \\
				- \partial_y v (0) + \lambda \sigma
				& = 
				& h_0,
				& \, \\
				v (0) - \sigma
				&	= 
				& h_1.
				& \,
			\end{array}
		\right.
	\end{align*}
for $(\lambda, \xi^\prime) \in (\DomainOfLambda\cup\{0\})  \times \mathbb{S}^{n-2}$($\theta>\pi/2$).
The equation of the first equation implies $v (y) = e^{ - \abs{\xi^\prime} y} v(0)$, and thus $- \partial_y v (0) = \abs{\xi^\prime} v (0)$.
Since the determinant of the matrix
	\begin{align*}
		\left(
			\begin{array}{cc}
				\abs{\xi^\prime}
				& \lambda \\
				1
				& -1
			\end{array}
		\right)
	\end{align*}
is never zero by the choice of $(\lambda, \xi^\prime)$.

The equation of the third asymptotic Lopatinskii--Shapiro condition is
		\begin{align*}
		\left \{
			\begin{array}{rcll}
				( \abs{\xi^\prime}^2 - \partial_y^2 ) v (y)
				& = 
				& 0
				& (y > 0), \\
				v (0)
				&	= 
				& h_1
				& \,
			\end{array}
		\right.
	\end{align*}
for $\xi^\prime \in \mathbb{S}^{n-2}$.
This equation is uniquely determined by $v (y) = e^{- \abs{\xi^\prime} y} h_1$.
Thus, the Lopatinskii--Shapiro and asymptotic Lopatinskii--Shapiro conditions are satisfied, and (\ref{eq_example_1}) is solvable in the maximal regularity space.

{\bf Example \ref{Examples}.2} 
		\begin{align}
			\left\{
			\begin{array}{rll}
				\eta u - \Delta u 
				&= f \quad 
				&(t\in J,~ x\in G), \\
				\partial_\nu u + \partial_t \rho - \Delta_\g \rho 
				&= g_0 \quad 
				&(t \in J,~ x\in \g), \\
				u - \rho 
				&= g_0 \quad 
				&(t \in J,~ x\in \g), \\
				\rho(0,x) 
				&= \rho_0(x) \quad 
				&(x\in \g). 
			\end{array}
		\right.
		\end{align}
The equation of the Lopatinskii--Shapiro condition is
	\begin{align*}
		\left \{
			\begin{array}{rll}
				( \eta  + \abs{\xi^\prime}^2 - \partial_y^2 ) v (y)
				& = 0
				& (y > 0) \\
				- \partial_y v (0) + \lambda \sigma + \abs{\xi^\prime}^2 v
				& = h_0
				& \, \\
				v (0) - \sigma
				& = h_1.
				& \,			
			\end{array} 
		\right.
	\end{align*}
We find $v (y) = e^{- \mu y} v (0) $ for $\mu = (\eta + \abs{\xi^\prime}^2)^{1/2}$ and
	\begin{align*}
		\mathrm{det} \left(
			\begin{array}{cc}
				\mu
				& \lambda + \abs{\xi^\prime}^2 \\
				1
				& -1
			\end{array}
		\right)
		\neq 0
	\end{align*}
for $(\lambda, \xi') \in (\DomainOfLambda \times \IR^{n-1}) \setminus\{(0,0)\}$ ($\theta >\pi / 2$).
Thus Lopatinskii--Shapiro condition is satisfied.

Let us check asymptotic Lopatinskii--Shapiro conditions.
The equation of the first and third asymptotic Lopatinskii--Shapiro conditions are
	\begin{align*}
		\left \{
			\begin{array}{rll}
				( \eta  + \abs{\xi^\prime}^2 - \partial_y^2 ) v (y)
				& = 0
				& (y > 0) \\
				v (0) - \sigma
				& = h_1
				& \,			
			\end{array} 
		\right.
	\end{align*}
for $\xi^\prime \in \Real^{n-1}$ and
	\begin{align*}
		\left \{
			\begin{array}{rll}
				( \abs{\xi^\prime}^2 - \partial_y^2 ) v (y)
				& = 0
				& (y > 0) \\
				v (0) - \sigma
				& = h_1
				& \,			
			\end{array} 
		\right.
	\end{align*}
for $\xi^\prime \in \mathbb{S}^{n-2}$.
By the same way as above, we find this equation is uniquely solvable.

The equation of the second asymptotic Lopatinskii--Shapiro condition is
	\begin{align*}
		\left \{
			\begin{array}{rll}
				( \abs{\xi^\prime}^2 - \partial_y^2 ) v (y)
				& = 0
				& (y > 0) \\
				- \partial_y v (0) + \lambda \sigma + \abs{\xi^\prime}^2 v
				& = h_0
				& \, \\
				v (0)
				& = h_1.
				& \,			
			\end{array} 
		\right.
	\end{align*}
$v (y)$ is determined by the first and third equations, and $\sigma$ is uniquely determined by the second equation for $(\lambda, \xi^\prime)$.

{\bf Example \ref{Examples}.3}
The third example is the Cahn--Hilliard equations with the dynamic boundary condition and surface diffusion
		\begin{align}
			\left\{
			\begin{array}{rll}
				\eta u + \Delta^2 u 
				&= f \quad 
				&(t\in J,~ x\in G), \\ 
				\partial_t u + \partial_\nu \rho - \Delta_\g \rho 
				& = g_0 
				&(t\in J,~ x\in \g), \\
				\partial_\nu \Delta u 
				& = g_1 
				&(t\in J,~ x\in \g), \\
				u - \rho 
				& = g_2 
				&(t\in J,~ x\in \g), \\
				\rho(0, x) 
				&= \rho_0(x) 
				&(x\in \g). \\
			\end{array}
		\right.
		\end{align}
The equation of the Lopatinskii--Shapiro condition is 
		\begin{align*}
			\left\{
			\begin{array}{rll}
				(\eta +  (|\xi'|^2 - \partial_y^2)^2) v(y) &= 0 \quad &(y>0), \\
				-\partial_y v(0) + (\lambda + |\xi'|^2) \sigma &=  h_0, \quad & \\
				-\partial_y (|\xi'|^2 - \partial_y^2) v(0) &= h_1, \quad &\\
				v(0) - \sigma & = h_2. \quad &
			\end{array}
		\right.
		\end{align*}
The solution of the first equation which belongs to $C_0(\IR_+; E)$ is $v(y) = C_1 e^{- z_1 y} + C_2 e^{- z_2 y}$ with $z_{1,2}:= \sqrt{|\xi|^2\pm \eta^{1/2} i}$ and $C_{1,2}\in \IC$. 
Note that the real parts of $z_1$ and $z_2$ are non-negative. 
The boundary conditions lead
		\begin{align*}
		\begin{pmatrix}
		 z_1 & z_2 & \lambda + |\xi'|^2\\
		 z_1(z_1^2 - |\xi'|^2) & z_2(z_2^2 - |\xi'|^2) & 0\\
		 1 & 1 & -1
		\end{pmatrix}
		\begin{pmatrix}
		 C_1\\
		 C_2\\
		 \sigma
		\end{pmatrix}
		= 
		\begin{pmatrix}
		 h_0\\
		 h_1\\
		 h_2
		\end{pmatrix}. 
		\end{align*}
We see that  the determinant of the matrix is $i \eta^{1/2} \left( (\lambda + \abs{\Prime{\xi}}) (z_1 + z_2 ) + 2 z_1 z_2 \right)$,
and this is not zero for $\eta>0$, $(\lambda, \xi') \in (\DomainOfLambda \times \IR^{n-1})\setminus\{(0,0)\}$ ($\theta > \pi / 2$). 
Therefore Lopatinskii--Shapiro condition is satisfied. 

Let us check asymptotic Lopatinskii--Shapiro conditions.
	\begin{align*}
		\left \{
			\begin{array}{rll}
				\eta v (y) + (\abs{\xi^\prime}^2 - \partial_y^2) v (y)
				& = 0
				& (y > 0) \\
				\partial_y (\abs{\xi^\prime}^2 - \partial_y^2) v (0)
				& = h_1
				& \, \\
				v (0)
				& = h_2. 
				& \, 
			\end{array}
		\right.
	\end{align*}
The solution is of the form $v (y) = C_1 e^{-z_1 y} + C_2 e^{-z_2 y}$.
Since
	\begin{align*}
		\mathrm{det} \left(
			\begin{array}{cc}
				- \abs{\xi^\prime}^2 - z_1^3
				& - \abs{\xi^\prime}^2 - z_2^3 \\
				1 
				& 1
			\end{array}
		\right)
		= - i \eta^{1/2} \left(
			\frac{2 \abs{\xi^\prime}^2}{z_1 + z_2} + z_1 + z_2
		\right)
		\neq 0
	\end{align*}
for $\xi^\prime \in \Real^{n-1}$, the first asymptotic Lopatinskii--Shapiro condition is satisfied.

The equation of the second asymptotic Lopatinskii--Shapiro condition is
	\begin{align*}
		\left \{
			\begin{array}{rll}
				(\abs{\xi^\prime}^2 - \partial_y^2) v (y)
				& = 0
				& (y > 0) \\
				- \partial_y v (0) + \lambda \sigma + \abs{\xi^\prime}^2 \sigma
				& = h_0
				& \, \\
				\partial_y (\abs{\xi^\prime}^2 - \partial_y^2) v (0)
				& = h_1
				& \, \\
				v (0)
				& = h_2. 
				& \, 
			\end{array}
		\right.
	\end{align*}
The solution is of the form $v (y) = C_1 e^{- \abs{\xi^\prime} y} + C_2 y e^{- \abs{\xi^\prime} y}$.
Since 
	\begin{align*}
		\mathrm{det} \left(
			\begin{array}{ccc}
				\abs{\xi^\prime}
				& -1
				& \lambda + \abs{\xi^\prime}^2 \\
				0
				& 2 \abs{\xi^\prime}^2
				& 0 \\
				1
				& 0 
				& 0				
			\end{array}
		\right)
		= - 2 \abs{\xi^\prime} (\lambda + \abs{\xi^\prime}^2)\neq0
	\end{align*}
for $(\lambda, \xi') \in (\DomainOfLambda\cup\{0\}) \times \IS^{n-2}$ ($\theta > \pi / 2$), it holds.

The equation of the third asymptotic Lopatinskii--Shapiro condition is 	
		\begin{align*}
		\left \{
			\begin{array}{rll}
				(\abs{\xi^\prime}^2 - \partial_y^2) v (y)
				& = 0
				& (y > 0) \\
				\partial_y (\abs{\xi^\prime}^2 - \partial_y^2) v (0)
				& = h_1
				& \, \\
				v (0)
				& = h_2. 
				& \, 
			\end{array}
		\right.
	\end{align*}
We find from the same way as above this equation admits a unique solution for $\xi^\prime\in \mathbb{S}^{n-2}$.\\

{\bf{Acknowledgements}}
The first author was supported by the Program for Leading Graduate Schools, MEXT, JAPAN. 
Second author was supported by JSPS KAKENHI Grant Number 19K23408.


\end{document}